\newtheorem{thm}{Theorem}[section]
 \newtheorem{cor}[thm]{Corollary}
 \newtheorem{prop}[thm]{Proposition}
 \theoremstyle{definition}
 \theoremstyle{remark}
 \newtheorem{rem}[thm]{Remark}
 \newcommand{\bpf}{\begin{proof}}
\newcommand{\epf}{\end{proof}}
\numberwithin{equation}{section}
\newcommand{\C}{{\mathbb C}}
\newcommand{\B}{{\mathbb B}}
\newcommand{\D}{{\mathbb D}}
\newcommand{\N}{{\mathbb N}}
\newcommand{\Z}{{\mathbb Z}}
\newcommand{\K}{{\kappa}}
\newcommand{\vp}{{\varphi}}
\begin{document}

\setcounter{page}{1}

\title[Applications of Reproducing Kernels in composition operators]{Applications of Reproducing Kernels in composition operators}
\author[P. Kumari, P. Muthukumar and A. Rasila]{Preeti Kumari, P. Muthukumar and Antti Rasila}

\address{Preeti Kumari, Department of Mathematics and Statistics, Indian Institute of Technology,
 Kanpur- 208016, India.}
\email{\textcolor[rgb]{0.00,0.00,0.84}{preetisharma8274@gmail.com, preetik22@iitk.ac.in}}

\address{P. Muthukumar, Department of Mathematics and Statistics, Indian Institute of Technology,
 Kanpur- 208016, India.}
\email{\textcolor[rgb]{0.00,0.00,0.84}{pmuthumaths@gmail.com, muthu@iitk.ac.in}}

\address{Antti Rasila, Department of Mathematics with computer Science, Guangdong Technion-Israel Institute of Technology,  Shantou, Guangdong 515063, P. R. China; and
Department of Mathematics, Technion-Israel Institute of Technology, Haifa
3200003, Israel.}
\email{\textcolor[rgb]{0.00,0.00,0.84}{antti.rasila@iki.fi, antti.rasila@gtiit.edu.cn}}

\subjclass[2020]{Primary 47B33, 46E22; Secondary 30H10.}

\keywords{Composition operators, Reproducing kernel Hilbert spaces, Hardy spaces. }

\date{\today}

\begin{abstract}
In this paper, we illustrate the effectiveness of reproducing kernel Hilbert space techniques in the study of composition operators. For weighted Hardy spaces on the unit disk, we characterize the composition operators whose adjoint is again a composition operator. Using reproducing kernel methods, we obtain a classification of bounded weighted composition operators acting between reproducing kernel Hilbert spaces. We also show that the reproducing kernel techniques yield simpler proofs of several known results, highlighting the role of reproducing kernels as a unifying structural tool in the analysis of composition operators.
\end{abstract}

\maketitle

\section{Introduction}

Let $\mathcal{V}$ be a linear space containing complex valued functions on a nonempty set $X$. For a self map $\vp$ of the set $X$, the \emph{composition operator} $C_\vp$ is defined as $$C_\vp(f)=f \circ \vp, \text{ for all } f\in \mathcal{V}.$$ More generally for a self map $\vp$ of the set $X$ and a complex valued function $\psi$ on the set $X$, the \emph{weighted composition operator} $W_{\vp,\psi}$ is defined as $$W_{\vp,\psi}(f)=\psi\cdot(f \circ \vp), \text{ for all } f\in \mathcal{V}.$$ When the weighting function $\psi$ is identically equal to 1, the operator $W_{\vp,\psi}$ simplifies to a composition operator. When the self map $\vp$ is the identity function on $X$ then the operator reduces to a multiplication operator, denoted as $M_\psi$. In this case $M_\psi(f)=\psi f$.

The study of composition operators acting on a space of holomorphic functions in one or more variables is well explored. Researchers actively explore their basic characteristics, such as boundedness, compactness, and spectral properties, across numerous analytic function spaces, like Hardy, Bergman, and Bloch spaces defined on a variety of domains, for example the unit ball $\B_n$ or the unit polydisc $\D^n$ in $\C^n$, see \cite{cow, shap, zhu ball} and references therein.

In \cite{jaf}, Jafari characterized the boundedness of composition operators on Hardy spaces $H^2(\D^n)$ and weighted Bergman spaces $A^2_\alpha(\D^n)$ using Carleson measures. A similar characterization holds true for the unit ball case, as demonstrated in \cite[Section 3.5]{cow}. The classical Littlewood's theorem (see \cite[page 16]{shap}) states that $C_\vp$ is a bounded linear operator on the Hardy space of the unit disk $H^2(\D)$ for every analytic self-map $\vp$ of $\D$. Moreover,
$$
\|C_\vp\| \leq \sqrt{\frac{1 + |\vp(0)|}{1 - |\vp(0)|}}.
$$

The Hardy and Bergman spaces are fundamental examples of a reproducing kernel Hilbert spaces (RKHS). Previous research has predominantly relied on the operator-theoretic and analytic frameworks of these spaces, while the RKHS framework have received limited attention. The structure provided by its reproducing kernels facilitates computations that would otherwise be more involved. For instance, in \cite{PPJ} an alternative proof of Littlewood’s theorem establishing the boundedness of composition operators on $H^2(\D)$ was given using only reproducing kernel techniques. The inequalities
$$
\|C_{\vp}\|\leq \left\|{\frac{\sqrt{1-|\vp(0)|^2}}{1-\overline{\vp(0)} \vp}}\right\|_\infty \leq \sqrt{\frac{1+|\vp(0)|}{1-|\vp(0)|}}
$$
were also obtained. Moreover, the obtained estimate is sharper than the classical estimate for a broader class of self-maps.

In this paper, we employ reproducing kernel techniques to derive new results concerning composition operators and to reprove some known theorems from a more unified perspective. In Section \ref{sec:inner}, we consider composition operators on the classical Hardy space $H^2(\D)$ induced by inner functions. In this setting the norm of composition operator $C_\vp$ is explicitly known. We show that this norm is not attained by any function in $H^2(\D)$ unless $\vp(0)=0$, in which case the composition operator is an isometry. We also construct a sequence of functions $(f_n)$ for which $\|C_\vp(f_n)\|$ approaches to $\|C_\vp\|$. In Section \ref{sec:adjoint}, we prove that the adjoint of a composition operator $C_\vp$ acting on a weighted hardy space $H^2(\beta)$ is itself a composition operator if and only if $\vp$ is an affine map fixing origin, that is, $\vp(z)=\delta z$ for some $|\delta|\leq1$. In Section \ref{sec:WCO}, we consider weighted composition operators acting between reproducing kernel Hilbert spaces and classify bounded weighted composition operators in terms of reproducing kernels. Finally in Section \ref{sec:several}, we give simpler proof of some known results on composition operators in several complex variables.

\section{Composition operators induced by inner functions}\label{sec:inner}

Recall that the Hardy space $H^2(\D)$ consists of those analytic functions over $\D$ which has square summable coefficients, that is,
$$
H^2(\D)= \left\{f(z)=\sum_{n=0}^\infty a_n z^n, \|f\|:=\left(\sum_{n=0}^\infty |a_n|^2\right)^{1/2}<\infty\right\}.
$$
It is a classical reproducing kernel Hilbert space with kernel function
$$
\K(z,w)=\frac{1}{1-\overline{w}z}, \text{ for }z,w\in(\D).
$$

For $p\in \D$, throughout the article, we use $\alpha_p$ to denote the special disk automorphism
that interchanges $p$ and $0$. That is,
$$
\alpha_p(z):=\frac{p-z}{1-\overline{p}z},\text{ } z\in \D.
$$
Note that for every $z\in\D$, we have the identity
\begin{equation}\label{spe-aut}
1-|\alpha_p(z)|^2=\frac{(1-|p|^2)(1-|z|^2)}{|1-\overline{p}z|^2},
\end{equation}
which we will use frequently.

Let $\vp:\D\rightarrow\D$ be an inner function, i.e.
$|\vp(e^{i\theta})|=1$ for almost all $\theta\in[0,2\pi)$.
In 1968, Nordgren \cite{Nord} computed the operator norm of $C_\vp$ as
\begin{equation}\label{innernorm}
    \|C_\vp\|=\sqrt{\frac{1+|\vp(0)|}{1-|\vp(0)|}}.
\end{equation}

Further if $\vp(0)=0$, then $C_\vp$ is an isometry on $H^2(\D)$ and therefore the
norm  $\|C_\vp\|$ is attained by every function in $H^2(\D)$, other than the zero function.
Does the norm  $\|C_\vp\|$ get attained by some function in $H^2(\D)$ for the case of $\vp(0)\neq 0$?
\begin{thm}\label{inner}
Let  $\vp$ be an inner function with  $\vp(0)\neq 0$ and $f\in H^2(\D)$. Then
$$
\|C_\vp f\|=\sqrt{\frac{1+|\vp(0)|}{1-|\vp(0)|}}\|f\|
$$
if and only if $f\equiv 0$.
That is, $\|C_\vp\|$ is never attained by any function in $H^2(\D)$.
\end{thm}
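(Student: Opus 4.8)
The plan is to convert the norm identity into a pointwise inequality on the unit circle $\T$, using the classical fact that an inner function pushes normalized arc-length measure forward to a Poisson measure. Set $a:=\vp(0)$, so that $0<|a|<1$ by hypothesis. Since $\vp$ is inner, it maps $\T$ into $\T$ almost everywhere and the boundary values of $C_\vp f=f\circ\vp$ are $f(\vp(\zeta))$; hence, writing $m$ for normalized arc-length measure,
$$
\|C_\vp f\|^2=\int_\T |f(\vp(\zeta))|^2\,dm(\zeta).
$$

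First I would establish the change of variables
$$
\int_\T g(\vp(\zeta))\,dm(\zeta)=\int_\T g(w)\,\frac{1-|a|^2}{|w-a|^2}\,dm(w),
$$
valid for $g$ integrable against the right-hand measure. For $g$ harmonic and continuous on $\overline{\D}$ this is immediate: $g\circ\vp$ is harmonic, so the mean value property gives $\int_\T g(\vp(\zeta))\,dm(\zeta)=g(\vp(0))=g(a)$, and $g(a)$ is exactly the Poisson integral appearing on the right. The identity then extends to $g=|f|^2$ since the Poisson kernel $P_a(w)=(1-|a|^2)/|w-a|^2$ is bounded and $|f|^2\in L^1(\T)$. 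This yields the representation
$$
\|C_\vp f\|^2=(1-|a|^2)\int_\T \frac{|f(w)|^2}{|w-a|^2}\,dm(w).
$$

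Next I would compare this against $\|C_\vp\|^2\|f\|^2$. Rewriting Nordgren's formula \eqref{innernorm} as $\|C_\vp\|^2=(1+|a|)/(1-|a|)=(1-|a|^2)/(1-|a|)^2$, the claimed equality $\|C_\vp f\|=\|C_\vp\|\,\|f\|$ becomes equivalent to
$$
\int_\T\Big(\frac{1}{(1-|a|)^2}-\frac{1}{|w-a|^2}\Big)|f(w)|^2\,dm(w)=0.
$$
By the reverse triangle inequality $|w-a|\ge 1-|a|$ for every $w\in\T$, so the parenthesized factor is nonnegative, and it vanishes exactly when $|w-a|=1-|a|$, i.e. only at the single point $w=a/|a|$ (this is precisely where the hypothesis $a\ne 0$ enters). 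The integrand is therefore nonnegative and can integrate to zero only if $|f|^2$ vanishes $m$-almost everywhere off the $m$-null set $\{a/|a|\}$. Hence $f=0$ a.e. on $\T$, which forces $f\equiv 0$; the converse implication is trivial.

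The step requiring the most care is the push-forward identity $\vp_\ast m=P_a\,dm$ for a genuinely $H^2$ (rather than bounded or continuous) function $f$: one must check integrability and justify passing the mean value identity from smooth $g$ to $g=|f|^2$. Boundedness of $P_a$ together with $f\in L^2(\T)$ settles the integrability, while a truncation or monotone-convergence argument completes the extension. Once this is in place, the comparison and the extraction of the equality condition are entirely elementary.
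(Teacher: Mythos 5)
Your proof is correct, and its endgame is identical to the paper's: both reduce the equality $\|C_\vp f\|=\|C_\vp\|\,\|f\|$ to the vanishing of $\int_\T |f|^2\bigl(\tfrac{1}{(1-|a|)^2}-\tfrac{1}{|w-a|^2}\bigr)\,dm$, observe the weight is nonnegative and vanishes only at the single point $w=a/|a|$, and conclude $f=0$ a.e. Where you differ is in how the representation $\|C_\vp f\|^2=\int_\T |f|^2 P_a\,dm$ is obtained. The paper factors $\vp=\alpha_p\circ\psi$ with $\psi=\alpha_p\circ\vp$ inner and $\psi(0)=0$, invokes the isometry of $C_\psi$ to get $\|C_\vp f\|=\|C_{\alpha_p}f\|$, and then computes an explicit Jacobian for the M\"obius map $\alpha_p$ on $\T$; you instead prove the push-forward identity $\vp_\ast m=P_a\,dm$ directly for a general inner $\vp$ via the mean value property for harmonic $g$ and a density/approximation step. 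Your route is arguably cleaner in that it never needs the automorphism trick, but it quietly uses two facts the paper's factorization sidesteps: (i) that the boundary function of $f\circ\vp$ agrees a.e.\ with $f^\ast\circ\vp^\ast$ for $f\in H^2$ and $\vp$ inner (standard, but it does require $\vp_\ast m\ll m$, e.g.\ via Ryff's theorem), and (ii) the passage from harmonic moments to the full measure identity, which needs the density of trigonometric polynomials in $C(\T)$ rather than just boundedness of $P_a$. Both points are routine and you flag the second one yourself, so I would count this as a correct proof by a mildly different route rather than a gap.
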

\begin{proof}
Let $p=\vp(0)$. Take
$\psi=\alpha_p\circ \vp$, so that $\psi$ is an inner function, and $\psi(0)=0$. Consequently, $C_\psi$ is an isometry on
$H^2(\D)$, and

\begin{equation}\label{aut}
\|C_{\alpha_p} f\|=\|C_\psi(C_{\alpha_p} f)\|=\|C_{\vp} f\|.
\end{equation}

Suppose that for some  $f\in H^2(\D)$, one obtains
 $$
\|C_{\vp} f\|=\sqrt{\frac{1+|\vp(0)|}{1-|\vp(0)|}}\|f\|.
$$
By denoting the radial limit function of $f$ again by $f$, we have

$$
\|f\circ \alpha_p\|^2=\int\limits_{0}^{2\pi}|f(\alpha_p(e^{i\theta}))|^2 \frac{d\theta}{2\pi}\cdot
$$
Consider the change of variable $e^{it}:=\alpha_p(e^{i\theta})$.
As $\alpha_p$ is its own inverse, we get $e^{i\theta}=\alpha_p(e^{it})$.
Differentiating this and a little calculation shows that

$$
  d\theta= \frac{\alpha'_p(e^{it})e^{it}}{\alpha_p(e^{it})}dt=\frac{1-|p|^2}{|1-\overline{p}e^{it}|^2} dt.
  $$
  Therefore, for all $f\in H^2(\D)$, we have
  $$
\|f\circ \alpha_p\|^2=\int\limits_{0}^{2\pi}|f(e^{it})|^2 \frac{1-|p|^2}{|1-\overline{p}e^{it}|^2}\frac{dt}{2\pi}\cdot
  $$
In view of the Equation  \eqref{aut}, one has
 $$
\|f\circ \alpha_p\|=\|C_{\alpha_p} f\|=\|C_{\vp} f\|=\sqrt{\frac{1+|p|}{1-|p|}}\|f\|.
$$
Equivalently,
$$
\int\limits_{0}^{2\pi}|f(e^{it})|^2\left(\frac{1+|p|}{1-|p|}-
 \frac{1-|p|^2}{|1-\overline{p}e^{it}|^2}\right)\frac{d\theta}{2\pi}=0.
$$
Since $1-|p|^2\neq 0$, we get
\begin{equation}\label{int}
\int\limits_{0}^{2\pi}|f(e^{it})|^2\left(\frac{1}{(1-|p|)^2}-
 \frac{1}{|1-\overline{p}e^{it}|^2}\right)\frac{d\theta}{2\pi}=0.
\end{equation}
By the triangle inequality, we have $|1-\overline{p}e^{it}|\geq 1-|p|$ and consequently,
the integrand in Equation \eqref{int} is non-negative. Therefore,
$$
|f(e^{it})|^2\left(\frac{1}{(1-|p|)^2}-
 \frac{1}{|1-\overline{p}e^{it}|^2}\right)=0 \mbox{~a.e.~} \partial \D.
 $$
But the equality $|1-\overline{p}e^{it}|= 1-|p|$ is obtained if and only if $t=\arg(p)$, where $\arg$ denotes
 the argument of a complex number. Hence $|f(e^{it})|^2=0$ a.e. on $\partial \D$, and thus
$\|f\|_{H^2(\D)}=0$. That is, $f\equiv 0$, showing the desired claim.
This proves one direction, and the converse is obvious.
\end{proof}

It is well-known that for any inner function $\vp$, we have
$$
\sqrt{\frac{1-|\vp(0)|}{1+|\vp(0)|}}\|f\|\leq \|C_{\vp} f\|,
$$
for all $f\in H^2(\D)$, see \cite[Theorem 3.8]{cow}.

\begin{cor}
 Let  $\vp$ be an inner function with  $\vp(0)\neq 0$ and $f\in H^2(\D)$. Then,
 $$
\sqrt{\frac{1-|\vp(0)|}{1+|\vp(0)|}}\|f\|= \|C_{\vp} f\|
$$
if and only if $f\equiv 0$.
\end{cor}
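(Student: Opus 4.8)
The plan is to mirror the argument used for Theorem~\ref{inner}, replacing the upper-norm equality by the lower-norm one. The converse is immediate: if $f\equiv 0$ both sides vanish. For the forward direction, set $p=\vp(0)$ and recall from the proof of Theorem~\ref{inner} that, since $\psi=\alpha_p\circ\vp$ is inner with $\psi(0)=0$, the operator $C_\psi$ is an isometry, and consequently $\|C_\vp f\|=\|C_{\alpha_p}f\|=\|f\circ\alpha_p\|$. The same change of variables $e^{it}=\alpha_p(e^{i\theta})$ then yields the representation
$$
\|f\circ\alpha_p\|^2=\int_0^{2\pi}|f(e^{it})|^2\,\frac{1-|p|^2}{|1-\overline{p}e^{it}|^2}\,\frac{dt}{2\pi},
$$
which I would take as the starting point.

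Next I would impose the hypothesis $\|C_\vp f\|^2=\frac{1-|p|}{1+|p|}\|f\|^2$. Substituting the display above and writing $\|f\|^2=\int_0^{2\pi}|f(e^{it})|^2\,dt/2\pi$, I would combine the two integrals into
$$
\int_0^{2\pi}|f(e^{it})|^2\left(\frac{1-|p|^2}{|1-\overline{p}e^{it}|^2}-\frac{1-|p|}{1+|p|}\right)\frac{dt}{2\pi}=0.
$$
Using $1-|p|^2=(1-|p|)(1+|p|)$ and cancelling the strictly positive factor $1-|p|$, this reduces to
$$
\int_0^{2\pi}|f(e^{it})|^2\left(\frac{1+|p|}{|1-\overline{p}e^{it}|^2}-\frac{1}{1+|p|}\right)\frac{dt}{2\pi}=0.
$$

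The crucial sign analysis is the step I expect to be the main point, and it is precisely where this corollary differs from the theorem. In Theorem~\ref{inner} the relevant estimate was the \emph{lower} bound $|1-\overline{p}e^{it}|\geq 1-|p|$, with equality only at $t=\arg(p)$; here I instead need the \emph{upper} bound $|1-\overline{p}e^{it}|\leq 1+|p|$, which follows from the triangle inequality. This shows $(1+|p|)^2-|1-\overline{p}e^{it}|^2\geq 0$, so the parenthetical factor above is nonnegative and hence the whole integrand is nonnegative. Therefore the integrand vanishes almost everywhere, forcing $|f(e^{it})|^2=0$ wherever the factor is strictly positive.

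Finally I would check that equality $|1-\overline{p}e^{it}|=1+|p|$ holds only when $\overline{p}e^{it}=-|p|$, i.e.\ at the single value $t=\arg(p)+\pi$ modulo $2\pi$. Since this is a measure-zero set, the factor is strictly positive for almost every $t$, and therefore $|f(e^{it})|^2=0$ almost everywhere on $\partial\D$. This gives $\|f\|=0$, that is, $f\equiv 0$. The only delicate point, exactly as in the theorem, is confirming that the boundary equality set is a singleton rather than a full arc; but this is immediate, since the triangle inequality $|1-\overline{p}e^{it}|\leq 1+|p|$ is strict except when $1$ and $-\overline{p}e^{it}$ are positive multiples of one another.
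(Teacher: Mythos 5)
Your proposal is correct and follows exactly the route the paper intends: the paper's proof of this corollary is a one-line remark that one repeats the argument of Theorem~\ref{inner} with the upper bound $|1-\overline{p}e^{it}|\leq 1+|p|$ in place of the lower bound, which is precisely what you carry out, including the correct identification of the equality set as the single point $t=\arg(p)+\pi$.
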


\begin{proof}
  The proof is similar to that of Theorem \ref{inner},  by making use of  the inequality $|1-\overline{p}e^{it}|\leq 1+|p|$
  instead of $|1-\overline{p}e^{it}|\geq 1-|p|$.
\end{proof}

In literature, when $\vp$ is an inner function, then the norm $\|C_\vp\|$ is computed as follows:
\[\|C_\vp\|=\|C_{\alpha_{\vp(0)}}\|=\|C_{\alpha_{\vp(0)
}}^*\|=\sqrt{\frac{1-|\vp(0)|}{1+|\vp(0)|}}\cdot
\]
Now, can one obtain an explicit sequence $\{f_n\}$ in $H^2(\D)$ such that
$$
\frac{\|C_\vp(f)\|}{\|f\|}\rightarrow\|C_\vp\| \text{ ?}
$$
To answer this, we need the following proposition.

\begin{prop} \label{ineq}
Let $p$ be a nonzero element in $\D$. 
 Then
\begin{itemize}
    \item[(i)] $\displaystyle\frac{\|C_{\alpha_p}\K_{w}\|}{\|\K_{w}\|}\leq \frac{\|C_{\alpha_p}^*\K_{-w}\|}{\|\K_{-w}\|}$
    for all $w=-\frac{pr}{|p|}$, where  $0<r<1;$
    \item[(ii)] $\displaystyle\frac{\|C_{\alpha_p}^*\K_{z}\|}{\|\K_{z}\|} \leq
    \frac{\|C_{\alpha_p} \K_{\alpha_p(z)}\|}{\|\K_{\alpha_p(z)}\|}$ for all $z\in \D.$
\end{itemize}
\end{prop}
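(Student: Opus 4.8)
The plan is to reduce both inequalities to explicit closed forms for the four ratios involved, all of which follow from two basic facts about the Hardy space kernel: the norm identity $\|\K_w\|^2=\K(w,w)=\tfrac{1}{1-|w|^2}$, and the adjoint action on kernels $C_\vp^*\K_w=\K_{\vp(w)}$, valid for any analytic self-map $\vp$. Applying the latter with $\vp=\alpha_p$ gives $C_{\alpha_p}^*\K_w=\K_{\alpha_p(w)}$, so that, after invoking the automorphism identity \eqref{spe-aut}, one obtains the first building block
\begin{equation*}
\frac{\|C_{\alpha_p}^*\K_w\|}{\|\K_w\|}=\frac{|1-\overline{p}w|}{\sqrt{1-|p|^2}}.
\end{equation*}

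The heart of the argument is the companion formula for $C_{\alpha_p}$ itself. Here I would first compute $C_{\alpha_p}\K_w=\K_w\circ\alpha_p$ explicitly: writing out $1-\overline{w}\alpha_p(z)$ and clearing denominators, the rational function collapses to
\begin{equation*}
C_{\alpha_p}\K_w=\frac{1-\overline{p}z}{1-\overline{w}p}\,\K_{\alpha_p(w)},
\end{equation*}
that is, a scalar multiple of a kernel function times the linear factor $1-\overline{p}z$. To take its norm I would use that the forward shift $M_z$ is an isometry on $H^2(\D)$ and that kernels are eigenfunctions of the backward shift, $M_z^*\K_v=\overline{v}\,\K_v$; then expanding $\|(1-\overline{p}z)\K_{\alpha_p(w)}\|^2$ reduces to inner products of kernels, and after simplification with \eqref{spe-aut} one arrives at the second building block
\begin{equation*}
\frac{\|C_{\alpha_p}\K_w\|}{\|\K_w\|}=\frac{\sqrt{1-|p|^2|w|^2}}{|1-\overline{w}p|}.
\end{equation*}
I expect this norm computation to be the main obstacle, since it is the only step requiring more than substitution; a power-series expansion of $(1-\overline{p}z)\K_{\alpha_p(w)}$, or the boundary-integral change of variables already used in the proof of Theorem \ref{inner}, would serve as alternative routes to the same closed form.

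With both ratios in hand, each part follows by substitution and an elementary estimate. For (ii), I would set $w=\alpha_p(z)$ in the second formula and simplify using that $\alpha_p$ is an involution; after reducing via \eqref{spe-aut}, the claimed inequality becomes equivalent to $|z|^2\le 1$, which holds on $\D$. For (i), substituting $w=-\tfrac{pr}{|p|}$ makes $\overline{w}p=\overline{p}w=-|p|r$ real, so the left-hand ratio collapses to $\sqrt{\tfrac{1-|p|r}{1+|p|r}}$ and the right-hand ratio to $\tfrac{1-|p|r}{\sqrt{1-|p|^2}}$; squaring and cancelling the common positive factor $1-|p|r$ reduces the inequality to $1-|p|^2\le 1-|p|^2r^2$, that is, to $r\le 1$, which is exactly the hypothesis. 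In both cases equality would force $|z|=1$ (respectively $r=1$), consistent with the strict interior estimates.
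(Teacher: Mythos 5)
Your proposal is correct, but it proceeds differently from the paper in both parts. For the key norm computation, you factor $C_{\alpha_p}\K_w=\tfrac{1-\overline{p}z}{1-\overline{w}p}\K_{\alpha_p(w)}$ and evaluate $\|(1-\overline{p}z)\K_{\alpha_p(w)}\|^2$ via the shift identities $\|M_zf\|=\|f\|$ and $M_z^*\K_v=\overline{v}\K_v$, arriving at the closed form $\|C_{\alpha_p}\K_w\|^2/\|\K_w\|^2=(1-|p|^2|w|^2)/|1-\overline{w}p|^2$ for \emph{all} $w\in\D$; the paper instead first reduces by a rotation to $p>0$, $w=-r$, and then computes the same quantity by a boundary integral evaluated with Cauchy's residue theorem. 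Your route avoids both the rotation reduction and the contour integral, and the extra generality of your formula is what lets you prove part (ii) by direct substitution (reducing it to $|\alpha_p(z)|\le 1$, i.e.\ $|z|^2\le 1$). The paper proves (ii) quite differently, by the abstract Cauchy--Schwarz step $\|C_{\alpha_p}^*\K_z\|^2=\langle C_{\alpha_p}C_{\alpha_p}^*\K_z,\K_z\rangle\le\|C_{\alpha_p}C_{\alpha_p}^*\K_z\|\,\|\K_z\|$ combined with $C_{\alpha_p}^*\K_z=\K_{\alpha_p(z)}$; that argument uses nothing specific to $H^2(\D)$ or to $\alpha_p$, which is why the paper can immediately recycle it in Remark \ref{lower bound} for composition operators between arbitrary RKHSs. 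So your computation buys explicit formulas (and identifies exactly when equality fails), while the paper's (ii) buys portability to the general RKHS setting; be aware that your version of (ii) would not by itself yield Remark \ref{lower bound}.
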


\bpf
(i) Let $p=|p|e^{i\zeta}$. Then $w=-e^{i\zeta}r$.
Consider the rotation map $\psi(z)=e^{-i\zeta}z$. Then,
$$
\psi\circ\alpha_p\circ \psi^{-1}(z)=\frac{|p|-z}{1-|p|z},
$$
is the special   automorphism $\alpha_{|p|}$.
As $C_\psi$ is an unitary operator on $H^2(\D)$, we observe that
$$
\|C_{\alpha_{|p|}}(\kappa_{-r})\|=\|C_{\psi^{-1}}C_{\alpha_p} C_\psi(\kappa_{-r})\|=\|C_{\alpha_p}(\kappa_{w})\|
$$
and similarly $\|C_{\alpha_{|p|}}^*(\kappa_{r})\|=\|C_{\alpha_p}^*(\kappa_{-w})\|$.
Hence, without loss of generality, we may assume that $p>0$ and $w=-r$ for $0<r<1$.

To prove the claim, let us first compute $\|C_{\alpha_p}\K_{w}\|$:
$$
C_{\alpha_p}\K_{w}(z)=\frac{1}{1-\overline{w}\alpha_p(z)}=
\frac{1}{1+r(\frac{p-z}{1-pz})}=\frac{1}{1+rp}\cdot\frac{1-pz}{1-\beta z},
$$
where $\beta=\dfrac{p+r}{1+pr}=\alpha_p(-r)$. Define $F(z)=\dfrac{1-pz}{1-\beta z}$. Then
\begin{equation}\label{F}
 \|C_{\alpha_p}(\K_{w})\|^2=\frac{1}{(1+rp)^2}\|F\|^2.
\end{equation}
Now,
$$
\|F\|^2  \displaystyle =\frac{1}{2\pi}\int_0^{2\pi}|F(e^{i\theta})|^2d\theta
=\frac{1}{2\pi}\int_0^{2\pi} \frac{1-2p\cos\theta+p^2}{1-2\beta\cos\theta+\beta^2}d\theta.
$$
Making the change of variable $z=e^{i\theta}$, we see that
$$
\|F\|^2 =\frac{1}{2\pi i}\int_{|z|=1}\frac{p(z-p)(z-\frac{1}{p})}{\beta(z-\beta)(z-\frac{1}{\beta})}\frac{dz}{z}\cdot
$$
As $\beta\in \D$, by using Cauchy's residue theorem,  we obtain

\begin{eqnarray*}
    \|F\|^2  &=&\displaystyle\frac{p}{\beta}\left(1+\frac{(\beta-p)(\beta-\frac{1}{p})}{\beta^2-1}\right)\\
      &=&\displaystyle\frac{p(1+pr)}{p+r}\left(1+\frac{r(1-p^2)}{p(1-r^2)}\right)
      =\displaystyle\frac{1-p^2r^2}{1-r^2}\cdot
\end{eqnarray*}
Substituting the value of $\|F\|$ in Equation \eqref{F}, one gets

$$
\frac{\|C_{\alpha_p}(\K_{w})\|^2}{\|\K_{w}\|^2}
=\frac{1-p^2r^2}{(1+rp)^2} \cdot
$$
Next, with the help of Equation \eqref{spe-aut}, we have
$$
\frac{\|C_{\alpha_p}^*(\K_{-w})\|^2}{\|\K_{-w}\|^2}
=\frac{1-r^2}{1-|\alpha_p(r)|^2}=\frac{(1-pr)^2}{1-p^2}\cdot
$$
Now, for any $p>0$ and $0<r<1$, we see that
$$
\frac{\|C_{\alpha_p}(\K_{w})\|^2}{\|\K_{w}\|^2}\leq
\frac{\|C_{\alpha_p}^*(\K_{-w})\|^2}{\|\K_{-w}\|^2}\cdot
$$
This is because
$$
\frac{1-p^2r^2}{(1+rp)^2} \leq \frac{(1-pr)^2}{1-p^2}
\text{ if and only if } r\leq1.
$$
(ii) The desired claim  follows from the observation that
$$
 \|C_{\alpha_p}^*\K_{z}\|^2
 = \langle C_{\alpha_p}^*\K_{z},C_{\alpha_p}^*\K_{z}\rangle
 = \langle C_{\alpha_p} C_{\alpha_p}^*\K_{z},\K_{z}\rangle
 \leq \|C_{\alpha_p} C_{\alpha_p}^*\K_{z}\|\|\K_{z}\|.
$$
Consequently, we have
$$
\frac{\|C_{\alpha_p}^*\K_{z}\|}{\|\K_{z}\|}
\leq \frac{\|C_{\alpha_p} C_{\alpha_p}^*\K_{z}\|}{\|C_{\alpha_p}^*\K_{z}\|}
= \frac{\|C_{\alpha_p} \K_{\alpha_p(z)}\|}{\|\K_{\alpha_p(z)}\|}\cdot
$$
\epf

\begin{rem}
    Let $\vp$ be an inner function with $\vp(0)=p\neq0$. Let $(r_n)$ be a sequence of real numbers
    such that $0< r_n< 1$ and $r_n\rightarrow 1$ as $n\rightarrow \infty$. Now, define $w_n=-\frac{p}{|p|}r_n$.
    From the Equation \eqref{spe-aut}, we get
     $$
     \frac{1-|w_n|^2}{1-|\alpha_p(w_n)|^2}=\frac{|1-\overline{p}w_n|^2}{1-|p|^2}
     =\frac{(1+|p|r_n)^2}{1-|p|^2}\cdot
     $$
    Now, as $n\rightarrow \infty$,
    $$
    \frac{\|C_{\alpha_p}^*(\K_{w_n})\|^2}{\|\K_{w_n}\|^2}=\frac{1-|w_n|^2}
    {1-|\alpha_p(w_n)|^2}\rightarrow\frac{(1+|p|)^2}{1-|p|^2}=\frac{1+|p|}{1-|p|}
    $$
    and similarly,
    $$
    \frac{\|C_{\alpha_p}^*(\K_{-w_n})\|^2}{\|\K_{-w_n}\|^2}=\frac{1-|w_n|^2}
    {1-|\alpha_p(-w_n)|^2}   \rightarrow\frac{(1-|p|)^2}{1-|p|^2}=\frac{1-|p|}{1+|p|}\cdot
    $$
Hence, as $n\rightarrow\infty$, by the Sandwich theorem and Theorem \ref{ineq}, we get
$$
\frac{\|C_{\alpha_p} \K_{\alpha_p(w_n)}\|}{\|\K_{\alpha_p(w_n)}\|}\rightarrow \sqrt{\frac{1+|p|}{1-|p|}}=\|C_{\alpha_p}\|,
$$
as well as
$$ \frac{\|C_{\alpha_p}(\K_{w_n})\|}{\|\K_{w_n}\|}\rightarrow \sqrt{\frac{1-|p|}{1+|p|}}=\frac{1}{\|C_{\alpha_p}\|}\cdot
$$
Since,
$\|C_\vp(f)\|=\|C_{\alpha_p}(f)\|$ for every $f\in H^2(\D)$,
 we obtained two sequences of normalized kernels such that the norm of their images under $C_\vp$ approach
 to the upper and lower bounds of the operator norm $\|C_\vp\|$, respectively.
\end{rem}

Although the proof is simple, next result is a really interesting generalization of \cite[Theorem 9.4]{cow}.
\begin{prop}\label{norm}
Let $\psi$ be an inner function and let
 $\vp(z)=az\psi(z)+b$ with $b\in \D$ and $|a|+|b|\leq 1$. Consider the composition operator $C_\vp$ on
 $H^2(\D)$. Then,
$$
\|C_{\vp}\|_{H^2(\D)}=\sqrt{\frac{2}{1+|a|^2-|b|^2+\sqrt{(1-|a|^2+|b|^2)^2-4|b|^2}}}\cdot
$$
\end{prop}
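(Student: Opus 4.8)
The plan is to strip off an inner factor that fixes the origin, thereby reducing the whole computation to the already-known affine case. First I would set $u(z)=z\psi(z)$ and $\sigma(\zeta)=a\zeta+b$. Since $\psi$ is inner, $|u(e^{i\theta})|=|e^{i\theta}||\psi(e^{i\theta})|=1$ for a.e.\ $\theta$, so $u=z\psi$ is again inner, and evidently $u(0)=0$. The hypothesis $|a|+|b|\le 1$ gives $|\sigma(\zeta)|\le|a||\zeta|+|b|\le|a|+|b|\le 1$ for $\zeta\in\overline{\D}$, so $\sigma$ is an analytic self-map of $\D$ and $C_\sigma$ is bounded on $H^2(\D)$ by Littlewood's theorem.

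Next I would record the factorization $\vp=\sigma\circ u$ (indeed $\sigma(u(z))=a\,z\psi(z)+b=\vp(z)$), which on the level of operators reads $C_\vp=C_uC_\sigma$, since $C_{\sigma\circ u}f=f\circ\sigma\circ u=C_u(f\circ\sigma)=C_uC_\sigma f$. Because $u$ is inner with $u(0)=0$, the operator $C_u$ is an isometry on $H^2(\D)$, exactly the fact already used in Section \ref{sec:inner}. Hence $\|C_\vp f\|=\|C_uC_\sigma f\|=\|C_\sigma f\|$ for every $f\in H^2(\D)$, and therefore $\|C_\vp\|=\|C_\sigma\|$. This is the crux of the argument: the inner factor $z\psi(z)$ is \emph{invisible} to the operator norm.

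It then remains to evaluate $\|C_\sigma\|$ for the affine symbol $\sigma(\zeta)=a\zeta+b$, which I would take from the linear-fractional norm formula in \cite[Theorem 9.4]{cow}; it delivers precisely $\sqrt{2/(1+|a|^2-|b|^2+\sqrt{(1-|a|^2+|b|^2)^2-4|b|^2})}$. If a self-contained derivation is preferred, the same value follows by reproducing kernels: from $C_\sigma^*\K_w=\K_{\sigma(w)}$ together with Cowen's adjoint representation $C_\sigma^*=M_gC_\tau$ with $g(z)=(1-\overline{b}z)^{-1}$ and $\tau(z)=\overline{a}z(1-\overline{b}z)^{-1}$, one writes $\|C_\sigma\|^2=\|C_\sigma^*C_\sigma\|$ and locates the top of the spectrum of this positive operator; its relevant part is governed by the quadratic $t^2-(1+|a|^2-|b|^2)t+|a|^2=0$, and the reciprocal of the larger root is exactly $\|C_\sigma\|^2$, matching the displayed expression.

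The genuinely new content is the first reduction, so the proof is short, as claimed; the only points demanding care are that $z\psi(z)$ is inner \emph{and} fixes the origin (so that $C_u$ is isometric rather than merely bounded) and that $\sigma$ is a legitimate self-map of $\D$. The main obstacle is thus essentially bookkeeping around the hypotheses of \cite[Theorem 9.4]{cow}, which I would confirm by testing the two degenerate cases against the formula: $a=0$, where $C_\vp f=f(b)\,$ is rank one with norm $\|\K_b\|$, giving $\|C_\vp\|^2=(1-|b|^2)^{-1}$; and $b=0$, where $\|C_\vp\|=\|C_\sigma\|$ with $C_\sigma(z^n)=a^nz^n$, giving $\|C_\vp\|=1$.
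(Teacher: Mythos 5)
Your proof is correct and follows essentially the same route as the paper: factor $\vp=(az+b)\circ(z\psi)$, observe that $C_{z\psi}$ is an isometry because $z\psi$ is inner and vanishes at the origin, conclude $\|C_\vp\|=\|C_{az+b}\|$, and invoke \cite[Theorem 9.4]{cow} for the affine symbol. The extra sanity checks and the sketch of a self-contained derivation of the affine formula are fine but not needed.
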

\begin{proof}
 Note that $\vp=az\psi+b=(az+b)\circ z\psi$ and consequently,
$$
C_\vp=C_{z\psi}C_{az+b}.
$$
Now, as an immediate consequence of  \cite[Theorem 3.8]{cow}, we see that
$C_{z\psi}$ is an isometry on $H^2(\D)$. Therefore,

$$
\|C_{\vp}\|=\sup\limits_{\|f\|=1}\frac{\|C_{\vp}(f)\|}{\|f\|}=\sup\limits_{\|f\|=1}\frac{\|C_{z\psi}(C_{az+b}(f))\|}{\|f\|}=
\sup\limits_{\|f\|=1}\frac{\|C_{az+b}(f)\|}{\|f\|}=\|C_{az+b}\|.
$$
Now, the desired result follows from \cite[Theorem 9.4]{cow}.
\end{proof}

We end this section with the following remark which follows immediately from Proposition \ref{ineq} (part ii).
\begin{rem}\label{lower bound}
    Let $H(\K)$ and $H(\rho)$ be RKHSs with kernels $\K$ and $\rho$, respectively, on a nonempty set $X$ and $\vp:X\rightarrow X$ be a function.
    If the composition operator $C_{\vp}:H(\K)\rightarrow H(\rho)$ is bounded then
    $$
    \underset{x\in X}{\sup} \frac{\|C_\vp^*\rho_x\|}{\|\rho_x\|}\leq
    \underset{x\in X}{\sup} \frac{\|C_\vp\K_{\vp(x)}\|}{\|\K_{\vp(x)}\|} \leq \|C_\vp\|.$$
\end{rem}

\section{Adjoint of the composition operators}\label{sec:adjoint}

The adjoint of composition operator is rarely a composition operator. Schwarz in \cite[Theorem 2.3]{Sch} showed that it is possible for composition operators defined on $H^2(\D)$ if and only if
$$
\vp(z)=\alpha z, \quad|\alpha|\leq1.
$$
In the next theorem, we give a simpler proof of this result utilizing only the reproducing kernel properties of $H^2(\D).$

\begin{thm}
    Let $\vp$ be a holomorphic self-map of the unit disc $\D$ and $C_\vp:H^2(\D)\rightarrow H^2(\D)$ be
    the corresponding composition operator. Then $C_\vp^*$ is a composition operator if and only if $\vp(z)=\delta z$ for some $|\delta|\leq1$.
\end{thm}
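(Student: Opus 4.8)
The plan is to work entirely with kernel functions. Writing $\K_w$ for the kernel function at $w\in\D$, the reproducing property immediately yields the adjoint identity
$$
C_\vp^*\K_w=\K_{\vp(w)},\qquad w\in\D,
$$
since $\langle f, C_\vp^*\K_w\rangle = (C_\vp f)(w) = f(\vp(w))=\langle f,\K_{\vp(w)}\rangle$ for every $f\in H^2(\D)$. I would also record the standard fact that $\{\K_w:w\in\D\}$ spans a dense subspace of $H^2(\D)$ (a function orthogonal to every $\K_w$ vanishes identically), so that a bounded operator is determined by its action on the kernels. This is essentially the only structural input both directions rely on.

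For the converse I would assume $\vp(z)=\delta z$ and guess the candidate adjoint $\sigma(z)=\overline{\delta}z$, which is itself a self-map of $\D$, so that $C_\sigma$ is bounded by Littlewood's theorem. A one-line computation $\K_w(\sigma(z))=1/(1-\overline{\delta w}z)=\K_{\delta w}(z)=\K_{\vp(w)}(z)$ then shows that $C_\sigma$ and $C_\vp^*$ agree on every kernel, and density upgrades this to $C_\vp^*=C_\sigma$.

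For the main direction I would start from the hypothesized equality $C_\vp^*=C_\sigma$ for some holomorphic self-map $\sigma$, and apply both sides to $\K_w$. Using $C_\sigma\K_w=\K_w\circ\sigma$ together with the adjoint identity, the two rational functions $1/(1-\overline{w}\sigma(z))$ and $1/(1-\overline{\vp(w)}z)$ must coincide, which I would reduce to the functional equation
$$
\overline{w}\,\sigma(z)=\overline{\vp(w)}\,z,\qquad z,w\in\D.
$$
Setting $w=0$ gives $\vp(0)=0$; then for each fixed $w\neq0$ the equation forces $\sigma$ to be linear with slope $\overline{\vp(w)}/\overline{w}$, and since $\sigma$ cannot depend on $w$ this slope must be a constant, say $\overline{\delta}$. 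This simultaneously yields $\sigma(z)=\overline{\delta}z$ and $\vp(w)=\delta w$, and the constraint $|\delta|\leq1$ then follows because $\vp$ is a self-map of $\D$.

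The main obstacle is conceptual rather than computational: one must ensure that agreement on the kernel functions genuinely forces operator equality, which requires both the density of the kernel span and the boundedness of the operators involved (supplied by Littlewood's theorem). Once that is in place, the remainder is just a manipulation of the explicit kernel, so I expect no serious difficulty.
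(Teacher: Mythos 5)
Your proposal is correct and follows essentially the same route as the paper: both derive the functional equation $\overline{\vp(w)}\,z=\overline{w}\,\sigma(z)$ from the action of $C_\vp^*$ and $C_\sigma$ on kernel functions, set $w=0$ to get $\vp(0)=0$, and then observe that the slope $\overline{\vp(w)}/\overline{w}$ must be a constant, giving $\vp(z)=\delta z$; the converse computation is identical. Your explicit remark on the density of the kernel span is a sensible justification the paper leaves implicit, but it does not change the argument.
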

\bpf Assume $C_\vp^*=C_\psi$, for some holomorphic self-map $\psi$ of $\D$. Then for each $z,w \in \D$,

$$\begin{array}{lrcl}
     & C_\vp^*(\K_w)(z) & = & C_\psi(\K_w)(z) \\
    \iff & \displaystyle \frac{1}{1-\overline{\vp(w)}z} &=& \displaystyle \frac{1}{1-\overline{w}\psi(z)}\\
    \iff & \overline{\vp(w)}z &=& \overline{w}\psi(z)
\end{array}$$
Putting $w=0$ gives $\vp(0)=0$ and similarly putting $z=0$ gives $\psi(0)=0$. Hence,
$$\vp=z\vp_1 \text{ and } \psi=z\psi_1$$
for some holomorphic maps $\vp_1$ and $\psi_1$ on $\D$. Now, $$\overline{\vp_1(w)}=\psi_1(z)
\text{ for all }z,w\in \D\backslash\{0\}.$$ This forces $\vp_1\equiv \delta$ for some constant $\delta$.
Hence $\vp(z)=\delta z$ for some $|\delta|\leq1$.

For the other way, let $\vp(z)=\delta z$ for some $|\delta|\leq1$. Let $\psi$ be the holomorphic self-map of $\D$
defined as $\psi(z)=\overline\delta z$. Then, for every $z,w\in \D$ we have
$$
C_\vp^*(\K_w)(z)=\frac{1}{1-\overline{\vp(w)}z}=\frac{1}{1-\overline{\delta w} z}
=\K_w(\overline\delta z)=C_\psi(\K_w)(z).
$$ Hence, $C_\vp^*=C_\psi$.

\epf
Recall, for $\alpha>-1$, the weighted Bergman space $A^2_\alpha$ on $\D$ is defined as the
collection of all holomorphic function on $\D$ for which
$$
\displaystyle \|f\|^2_{A^2_\alpha}:=\int_{\D} |f(z)|^2 (1-|z|^2)^{\alpha} dA(z)<\infty,
$$ where $A$ is the normalized area measure on $\D$.
The spaces $A^2_\alpha$ are reproducing kernel Hilbert spaces with kernels defined by
$$
\K_w(z)=\frac{1}{(1-\overline{w}z)^{\alpha+2}}\cdot
$$
If $f\in A^2_\alpha$ and $f(z)=\sum_{n=0}^\infty a_nz^n$ then
$$
\|f\|^2_{A^2_\alpha}=\sum_{n=0}^\infty \frac{n!\Gamma(2+\alpha)}{\Gamma(n+2+\alpha)}|a_n|^2.
$$
\begin{thm}
    Let $\vp$ be a holomorphic self-map of the unit disc.
    Then the adjoint of the composition operator $C_\vp:A^2_\alpha(\D)\rightarrow A^2_\alpha(\D)$
    is a composition operator itself if and only if $\vp(z)=\delta z$ for some $|\delta|\leq1$.
\end{thm}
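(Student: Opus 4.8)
The plan is to run the same kernel-based argument as in the Hardy space theorem, adapted to the weighted Bergman kernel $\K_w(z)=(1-\overline{w}z)^{-(\alpha+2)}$. The structural input is the general fact that for a bounded composition operator on a reproducing kernel Hilbert space the adjoint acts on kernel functions by $C_\vp^*\K_w=\K_{\vp(w)}$, which is immediate from $\langle f, C_\vp^*\K_w\rangle=\langle C_\vp f,\K_w\rangle=f(\vp(w))=\langle f,\K_{\vp(w)}\rangle$. Since $C_\vp$ is bounded, its adjoint exists, and assuming $C_\vp^*=C_\psi$ for some holomorphic self-map $\psi$ of $\D$, I would evaluate both operators on kernels to obtain, for all $z,w\in\D$,
$$
\frac{1}{(1-\overline{\vp(w)}z)^{\alpha+2}}=\K_{\vp(w)}(z)=C_\psi\K_w(z)=\frac{1}{(1-\overline{w}\psi(z))^{\alpha+2}}.
$$

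The main obstacle, and the only genuine departure from the Hardy space case, is stripping off the noninteger exponent $\alpha+2$ so as to recover the linear identity $\overline{\vp(w)}z=\overline{w}\psi(z)$. Here I would exploit that for each fixed $w$ the quantities $1-\overline{\vp(w)}z$ and $1-\overline{w}\psi(z)$ each lie in the open disk of radius one about $1$ (since $|\overline{\vp(w)}z|<1$ and $|\overline{w}\psi(z)|<1$), hence in the open right half-plane, where the principal logarithm is defined and injective. Taking logarithms of the displayed identity and dividing by $-(\alpha+2)$ yields $\log(1-\overline{\vp(w)}z)=\log(1-\overline{w}\psi(z))$, and injectivity of $\log$ on the right half-plane gives $1-\overline{\vp(w)}z=1-\overline{w}\psi(z)$, that is, $\overline{\vp(w)}z=\overline{w}\psi(z)$ for all $z,w\in\D$.

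From here the reasoning is verbatim the Hardy space argument. Putting $w=0$ forces $\vp(0)=0$ and putting $z=0$ forces $\psi(0)=0$, so I may factor $\vp(z)=z\vp_1(z)$ and $\psi(z)=z\psi_1(z)$ with $\vp_1,\psi_1$ holomorphic on $\D$. Substituting back and cancelling $\overline{w}z$ for $z,w\neq 0$ gives $\overline{\vp_1(w)}=\psi_1(z)$; as the left side depends only on $w$ and the right only on $z$, both equal a single constant $\delta$, whence $\vp(z)=\delta z$, with $|\delta|\le 1$ since $\vp$ maps $\D$ into $\D$. For the converse I would take $\psi(z)=\overline{\delta}z$ and verify directly on kernels that
$$
C_\vp^*\K_w(z)=\frac{1}{(1-\overline{\delta w}z)^{\alpha+2}}=\K_{w}(\overline{\delta}z)=C_\psi\K_w(z),
$$
and conclude $C_\vp^*=C_\psi$ because the kernels $\{\K_w\}_{w\in\D}$ span a dense subspace of $A^2_\alpha(\D)$.
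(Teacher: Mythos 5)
Your overall strategy matches the paper's, but the one step you yourself flag as ``the main obstacle'' --- stripping off the exponent $\alpha+2$ --- is exactly where your argument has a genuine gap. From
$$
e^{-(\alpha+2)\mathrm{Log}(1-\overline{\vp(w)}z)}=e^{-(\alpha+2)\mathrm{Log}(1-\overline{w}\psi(z))}
$$
you may only conclude that $(\alpha+2)\bigl(\mathrm{Log}(1-\overline{\vp(w)}z)-\mathrm{Log}(1-\overline{w}\psi(z))\bigr)$ lies in $2\pi i\Z$; ``taking logarithms and dividing by $-(\alpha+2)$'' is not legitimate, because $\mathrm{Log}(e^{u})=u$ only when $\mathrm{Im}\,u\in(-\pi,\pi]$, and here $\mathrm{Im}\bigl(-(\alpha+2)\mathrm{Log}(1-\overline{\vp(w)}z)\bigr)$ ranges over $\bigl(-(\alpha+2)\pi/2,(\alpha+2)\pi/2\bigr)$, which exceeds $(-\pi,\pi]$ as soon as $\alpha>0$. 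Equivalently, the power map $u\mapsto u^{-(\alpha+2)}$ is \emph{not} injective on the right half-plane, nor even on the disk $\{|u-1|<1\}$, once $\alpha+2>2$: for $\alpha=2$ the points $u=e^{\pm i\pi/4}$ both lie in that disk and both satisfy $u^{-4}=-1$. So your justification (``the principal logarithm is defined and injective on the right half-plane'') covers only $-1<\alpha\le 0$, while the theorem is asserted for all $\alpha>-1$.

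The missing ingredient is a connectedness/discreteness argument to kill the integer ambiguity, and this is precisely what the paper's proof supplies: it writes the discrepancy as $2\pi i\,n(z,w)$ with $n(z,w)\in\Z$, shows $n$ is constant (the paper does this via the open mapping theorem applied separately in $z$ for fixed $w$ and in $\overline{w}$ for fixed $z$, after replacing $\overline{\vp(w)}$ by $\tilde\vp(\overline{w})$ to restore holomorphy), and then evaluates at $(z,w)=(0,0)$ to get $n\equiv 0$, whence $1-\overline{\vp(w)}z=1-\overline{w}\psi(z)$. A slightly quicker repair along the same lines: the difference of the two logarithms is jointly continuous on the connected set $\D\times\D$ and takes values in the discrete set $\frac{2\pi i}{\alpha+2}\Z$, hence is constant, and it vanishes at the origin. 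Everything after this point in your write-up (deriving $\vp(0)=\psi(0)=0$, factoring out $z$, separating variables to get $\vp_1\equiv\delta$, and the converse via kernels and density) is correct and agrees with the paper.
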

\bpf Assume $C_\vp^*=C_\psi$, for some analytic self-map $\psi$ of $\D$. Then for each $z,w \in \D$,
$$\begin{array}{lrcl}
     & C_\vp^*(\K_w)(z) & = & C_\psi(\K_w)(z) \\
    \iff & \displaystyle \frac{1}{(1-\overline{\vp(w)}z)^{\alpha+2}} &=& \displaystyle \frac{1}{(1-\overline{w}\psi(z))^{\alpha+2}}\\
    \iff & (1-\overline{\vp(w)}z)^{\alpha+2} &=& (1-\overline{w}\psi(z))^{\alpha+2}
\end{array}$$

Note that $|1-\overline{\vp(w)}z)|\geq 1-|\overline{\vp(w)}z|>0$ and similarly $|1-\overline{w}\psi(z)|>0$.
Hence, principal branch of the complex logarithm (Log) exits and
$$
\displaystyle e^{(\alpha+2)\mathrm{Log}(1-\overline{\vp(w)}z)}=e^{(\alpha+2)\mathrm{Log}(1-\overline{w}\psi(z))}\text{ for all }z,w\in \D.
$$
Thus, for each pair $(z,w)\in \D\times\D$ there exists a $n(z,w)\in \Z$ such that
$$
(\alpha+2)\mathrm{Log}(1-\overline{\vp(w)}z)-(\alpha+2)\mathrm{Log}(1-\overline{w}\psi(z))= 2\pi i n(z,w).
$$
Fix $w\in \D$. Then the holomorphic map
$$
(\alpha+2)\mathrm{Log}(1-\overline{\vp(w)}z)-(\alpha+2)\mathrm{Log}(1-\overline{w}\psi(z))
$$
takes values in $\{2\pi i n:n\in\Z\}$. As a consequence of the open mapping theorem we get for each fixed $w\in \D$
$$n(z_1,w)=n(z_2,w)\text{ for all }z_1,z_2\in\D.$$
Further, let $\displaystyle\vp(z)=\sum_{n=0}^\infty a_n z^n$, and define $\displaystyle\tilde\vp(z)=\sum_{n=0}^\infty \overline{a_n} z^n$.
Then $\tilde\vp$ is holomorphic on $\D$ and $\overline{\vp(w)}=\tilde\vp(\overline{w})$. Now, fix $z\in \D$. Then

$$
(\alpha+2)\mathrm{Log}(1-\tilde\vp(\overline{w})z)-(\alpha+2)\mathrm{Log}(1-\overline{w}\psi(z))
$$
is a holomorphic map of $\overline{w}$, taking values in $\{2\pi i n:n\in\Z\}$.
Again by the open mapping theorem we get for each fixed $z\in \D$,
$$
n(z,w_1)=n(z,w_2)\text{ for all }w_1,w_2\in\D.
$$
Note that $n(0,0)=0$. Finally, for all $z,w\in \D$ we get
$\mathrm{Log}(1-\overline{\vp(w)}z)=\mathrm{Log}(1-\overline{w}\psi(z))$
that is same as $1-\overline{\vp(w)}z =1-\overline{w}\psi(z)$.
The result follows from the calculations made in the previous theorem.
\epf

Note that both the spaces considered above, namely the Hardy space $H^2(\D)$ and the weighted Bergman space $A^2_\alpha(\D)$ are weighted Hardy spaces, which is defined as follows: Let $(\beta)_{n>0}$ be a sequence with $\beta_0=1$, $\beta_n>0$ for all $n$, and
$\liminf \beta_n^{1/n}\geq1$, then the weighted Hardy space $H^2(\beta)$ is defined as
$$
H^2(\beta)=\left\{ f(z)=\sum_{n=1}^\infty a_n z^n: \sum_{n=1}^\infty |a_n|^2\beta_n^2 \right\}
$$
with $\langle f,g\rangle=\sum_n a_n\overline{b_n}\beta_j^2$.
It is well know that $H^2(\beta)$ is a reproducing kernel Hilbert space,
consisting of functions analytic on $\D$, with kernels
$$
\K_w(z)=\sum_{n=0}^\infty \frac{\overline{w}^n}{\beta_n^2}z^n
$$
 The next theorem shows that the result of the last two theorem is indeed true in general for all weighted Hardy spaces.

\begin{thm}
    Let $H^2(\beta)$ be a weighted Hardy space on $\D$, with weights $\beta=(\beta_j)$.
    Let $\vp$ be a holomorphic self-map of the unit disc $\D$ such that the composition operator $C_\vp:H^2(\beta)\rightarrow H^2(\beta)$ is bounded.
    Then $C_\vp^*$ is a composition operator itself if and only if $\vp(z)=\delta z$ for some $|\delta|\leq1$.
\end{thm}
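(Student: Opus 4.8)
The plan is to follow the template of the two preceding theorems, but to replace the explicit closed forms of the Hardy and Bergman kernels (which made the earlier computations transparent) by a structural feature common to every weighted Hardy space. The starting point is the universal identity $C_\vp^*\K_w=\K_{\vp(w)}$, valid for any bounded $C_\vp$: indeed, for every $f\in H^2(\beta)$ one has $\langle f,C_\vp^*\K_w\rangle=\langle C_\vp f,\K_w\rangle=(C_\vp f)(w)=f(\vp(w))=\langle f,\K_{\vp(w)}\rangle$. Hence, assuming $C_\vp^*=C_\psi$ for some holomorphic self-map $\psi$, I would obtain $\K_w(\psi(z))=\K_{\vp(w)}(z)$ for all $z,w\in\D$.

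The crucial observation is that $\K_w(z)=k(\overline{w}z)$, where $k(s):=\sum_{n=0}^\infty \beta_n^{-2}s^n$. The condition $\liminf\beta_n^{1/n}\geq1$ guarantees that $k$ has radius of convergence at least $1$, so $k$ is holomorphic on $\D$ and all the arguments $\overline{w}\psi(z)$ and $\overline{\vp(w)}z$, which lie in $\D$, are admissible. With this notation the adjoint identity becomes
$$
k\big(\overline{w}\,\psi(z)\big)=k\big(\overline{\vp(w)}\,z\big),\qquad z,w\in\D.
$$
Now $k(0)=\beta_0^{-2}=1$ and $k'(0)=\beta_1^{-2}\neq0$, so by the inverse function theorem $k$ is injective on some neighbourhood $U$ of $0$. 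Since $\overline{w}\psi(z)\to0$ and $\overline{\vp(w)}z\to0$ as $z,w\to0$, for $z,w$ sufficiently close to $0$ both arguments lie in $U$, which forces $\overline{w}\,\psi(z)=\overline{\vp(w)}\,z$ there. As both sides are holomorphic in $z$ and, after the reflection $w\mapsto\overline{w}$, holomorphic in $w$ as well, the identity theorem applied in each variable separately propagates this equality to all $z,w\in\D$.

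From the identity $\overline{w}\,\psi(z)=\overline{\vp(w)}\,z$ the argument used for $H^2(\D)$ applies verbatim: putting $w=0$ gives $\vp(0)=0$ and putting $z=0$ gives $\psi(0)=0$, so $\vp(z)=z\vp_1(z)$ and $\psi(z)=z\psi_1(z)$; cancelling $\overline{w}z$ then yields $\psi_1(z)=\overline{\vp_1(w)}$ for all nonzero $z,w$, which forces $\vp_1\equiv\delta$ and hence $\vp(z)=\delta z$ with $|\delta|\leq1$. For the converse I would take $\psi(z)=\overline{\delta}z$, note that $C_\psi$ is a contraction on $H^2(\beta)$ since $\|C_\psi f\|^2=\sum_n|a_n|^2|\delta|^{2n}\beta_n^2\leq\|f\|^2$, and check that $C_\vp^*\K_w=\K_{\vp(w)}=\K_{\delta w}=C_\psi\K_w$; as the kernels span a dense subspace, $C_\vp^*=C_\psi$.

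I expect the only genuinely new point—and the main obstacle compared with the Hardy and Bergman cases—to be the passage from the kernel identity to $\overline{w}\,\psi(z)=\overline{\vp(w)}\,z$, because here $k$ is not given explicitly and the logarithm trick of the Bergman proof is unavailable. The resolution is precisely the local injectivity of $k$ furnished by $\beta_1>0$ together with the observation that every weighted Hardy kernel depends only on the product $\overline{w}z$; once this step is secured, the remainder reduces to the first theorem of this section.
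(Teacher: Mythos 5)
Your proof is correct, and the overall skeleton (reduce the kernel identity to $\overline{w}\,\psi(z)=\overline{\vp(w)}\,z$, then run the elementary argument from the $H^2(\D)$ case, plus the same converse) matches the paper's. However, your treatment of the central step is genuinely different. The paper first extracts $\vp(0)=\psi(0)=0$ directly from the kernel identity at $w=0$ and $z=0$, then forms the two-variable analytic function $f(z,w)=\K_{\vp(\overline{w})}(z)-\K_{\overline{w}}(\psi(z))$, factors out $zw$ from its power series, invokes the identity theorem in several variables to kill the quotient, and finally evaluates at $z=0$ to read off that $\overline{\vp_1(w)}/\beta_1$ is constant --- it never establishes the bilinear identity $\overline{w}\,\psi(z)=\overline{\vp(w)}\,z$ itself. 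You instead exploit the structural fact that every weighted Hardy kernel has the form $\K_w(z)=k(\overline{w}z)$ with $k(s)=\sum_n\beta_n^{-2}s^n$, use $k'(0)=\beta_1^{-2}\neq0$ to get local injectivity of $k$ near the origin (which is legitimate since $|\overline{w}\psi(z)|\leq|w|$ and $|\overline{\vp(w)}z|\leq|z|$ keep both arguments in the injectivity neighbourhood for small $z,w$), and then propagate by the one-variable identity theorem in each variable separately. Your route buys a cleaner conceptual picture: it isolates exactly why the argument works (only $\beta_1>0$ and the dependence of $\K_w(z)$ on the product $\overline{w}z$ matter) and it unifies the Hardy and Bergman cases without the logarithm manipulation the paper needs for $A^2_\alpha$; the paper's route avoids any appeal to local injectivity by working coefficientwise, at the cost of the slightly heavier two-variable identity theorem. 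Both proofs are complete and correct.
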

\bpf
Assume $C_\vp^*=C_\psi$, for some self-map $\psi$ of $\D$. Then for each $z,w \in \D$,
\begin{equation}\label{adj-eqn}
    C_\vp^*(\K_w)(z) = C_\psi(\K_w)(z).
\end{equation}
In particular, when $w=0$ we get $\K_{\vp(0)}=1$, hence, $\vp(0)=0$.
Similarly, when $z=0$, we get $\psi(0)=0$. Thus, for some holomorphic maps $\vp_1$ and $\psi_1$ on $\D$,
$$\vp=z\vp_1 \text{ and } \psi=z\psi_1.$$


Define $f(z,w)=\K_{\vp(\overline{w})}(z)-\K_{\overline{w}}(\psi(z))$. Then $f$ is an analytic function on $\D^2$ using the fact that $\K_{\overline w}(z)=\K_{\overline z}(w)$ for every $(z,w)\in\D^2$. Now, putting the values of $\K,\vp$ and $\psi$ we obtain,
$$
f(z,w)=zw\left(\sum_{n=1}^\infty \frac{(wz)^{n-1}\overline{\vp_1(\overline{w})}^n}{\beta_n}-
\sum_{n=1}^\infty \frac{(wz)^{n-1}\psi_1(z)^n}{\beta_n}\right)\cdot
$$
By \eqref{adj-eqn}, $f\equiv0$. Hence,
$$
g(z,w):=\sum_{n=1}^\infty \frac{(wz)^{n-1}\overline{\vp_1(\overline{w})}^n}{\beta_n}-
\sum_{n=1}^\infty \frac{(wz)^{n-1}\psi_1(z)^n}{\beta_n}=0\text{ for all }z\neq0,w\neq0.
$$
Since, $g$ is also analytic on $\D^2$, by Identity theorem for several variables (see \cite[Conclusion 1.2.12]{vol}), we conclude that $g\equiv0$, i.e.
$$
\sum_{n=1}^\infty \frac{(wz)^{n-1}\overline{\vp_1(\overline{w})}^n}{\beta_n}=
\sum_{n=1}^\infty \frac{(wz)^{n-1}\psi_1(z)^n}{\beta_n}\text{ for all }(z,w)\in\D^2.
$$
In particular,
$$
\sum_{n=1}^\infty \frac{(\overline{w}z)^{n-1}\overline{\vp_1(w)}^n}{\beta_n}=
\sum_{n=1}^\infty \frac{(\overline{w}z)^{n-1}\psi_1(z)^n}{\beta_n}\text{ on }\D^2.
$$
Putting $z=0$ gives that
$$
\frac{\overline{\vp_1(w)}}{\beta_1}=
\frac{\psi(0)}{\beta_1}\text{ for all }w\in\D.
$$
Therefore, $\vp_1\equiv \delta$ for some constant $\delta$. Hence $\vp(z)=\delta z$ for some $|\delta|\leq1$.

For converse, let $\psi(z)=\overline{\delta}z$. We observe that for every $w\in \D$,
$$C_\vp^*(\K_w)=C_\psi(\K_w).$$
Hence, $C_\vp^*=C_\psi$ on $H^2(\beta)$.
\epf

\section{Weighted composition operators}\label{sec:WCO}

In \cite{jury}, Jury reproved the boundedness of composition operators acting on $H^2(\D)$ using reproducing kernel techniques. Following this, in \cite{chu}, Chu gave some sufficient condition in terms of reproducing kernels for the boundedness of composition operators acting on the Hardy space over bisdisc $H^2(\D^2)$ (see section \ref{sec:several} for definition). Unlike composition and multiplication operators, weighted composition operators received limited attetion in this regard. Although the boundedness of weighted composition operator acting between several function spaces has been characterized in terms of carleson measure condition, see \cite{manu}, \cite{zhao}, \cite{kellay}, \cite{stevic} and the references therein, Carleson measures are often difficult to compute explicitly; even for well-behaved functions such as inner functions. It is therefore desirable to have alternative characterizations. In this section, we will present a characterization of bounded weighted composition operators acting between RKHSs by utilizing their respective reproducing kernels. We will begin with showing that the image of an RKHS under a weighted composition operator is itself an RKHS, with a suitably modified kernel.

Prior to presenting our main theorems, we will review some fundamental concepts of reproducing kernels. For a more comprehensive treatment of reproducing kernel Hilbert spaces, we refer the reader to \cite{paul}.
\begin{prop}\label{property}
    Let $X$ and $S$ be nonempty sets. If $\K_1$ and $\K_2$ are kernel functions on $X$, then
    \begin{itemize}
        \item[(i)] The sum $\K_1+\K_2$ and the pointwise product $\K_1\cdot\K_2$ is a kernel function on $X$.
        \item[(ii)] If $\vp:S\rightarrow X$ is a function, then $\K_1(\vp(\cdot),\vp(\cdot))$ is a kernel function on S.
        \item[(iii)] For any complex valued map $f$ on $X$, the  map $(x,y)\mapsto f(x)\overline{f(y)}$ is a kernel function on $X$.
    \end{itemize}
\end{prop}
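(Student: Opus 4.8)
The plan is to work directly from the defining property of a kernel function, namely positive semi-definiteness: a map $\K : X\times X \to \C$ is a kernel precisely when, for every finite collection of points $x_1,\dots,x_n \in X$ and scalars $c_1,\dots,c_n \in \C$, one has $\sum_{i,j=1}^n c_i\overline{c_j}\,\K(x_i,x_j)\geq 0$. Each of the three assertions then reduces to verifying this single inequality for the candidate kernel, so I would treat the three parts as three short computations of quadratic forms.

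For part (i), the claim for the sum $\K_1+\K_2$ is immediate, since the quadratic form attached to $\K_1+\K_2$ is the sum of the two nonnegative forms associated with $\K_1$ and $\K_2$. The pointwise product is the substantive case and rests on the Schur product theorem. I would establish it by factoring the Gram matrices: because $[\K_1(x_i,x_j)]$ and $[\K_2(x_i,x_j)]$ are positive semi-definite, I may write $\K_1(x_i,x_j)=\sum_k a_{ik}\overline{a_{jk}}$ and $\K_2(x_i,x_j)=\sum_\ell b_{i\ell}\overline{b_{j\ell}}$. Multiplying gives $\K_1(x_i,x_j)\,\K_2(x_i,x_j)=\sum_{k,\ell}(a_{ik}b_{i\ell})\overline{(a_{jk}b_{j\ell})}$, which is again a Gram matrix and hence positive semi-definite. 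This is where I expect the main obstacle to lie: one must justify the factorization carefully (e.g.\ from the eigendecomposition or a Cholesky-type decomposition of a positive semi-definite matrix), and it is worth noting that the same argument admits a cleaner conceptual phrasing through the tensor product Hilbert space $H(\K_1)\otimes H(\K_2)$, whose reproducing kernel is exactly $\K_1\cdot\K_2$.

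For part (ii), I observe that the matrix $[\K_1(\vp(s_i),\vp(s_j))]$ arising from points $s_1,\dots,s_n \in S$ is nothing but the Gram matrix of $\K_1$ evaluated at the points $\vp(s_1),\dots,\vp(s_n)\in X$. Its positivity is therefore inherited directly from that of $\K_1$, and the only subtlety — that the images $\vp(s_i)$ need not be distinct — causes no trouble, since the positive semi-definiteness condition is stated for arbitrary (not necessarily distinct) points.

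For part (iii), the associated quadratic form collapses to a perfect square, $\sum_{i,j} c_i\overline{c_j}\,f(x_i)\overline{f(x_j)} = \bigl|\sum_{i} c_i f(x_i)\bigr|^2 \geq 0$, so the rank-one map $(x,y)\mapsto f(x)\overline{f(y)}$ is a kernel with no further work required. Collecting the three computations completes the proof.
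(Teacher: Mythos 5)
Your proof is correct. The paper itself gives no proof of this proposition: it is stated as standard background material with a pointer to Paulsen--Raghupathi \cite{paul}, and the argument you give (sum of nonnegative quadratic forms; Schur product via Gram factorization of the positive semi-definite matrices, which is legitimately obtained from $A=A^{1/2}A^{1/2}$; pullback of the Gram matrix along $\vp$; and the perfect-square computation $\sum_{i,j}c_i\overline{c_j}f(x_i)\overline{f(x_j)}=\bigl|\sum_i c_i f(x_i)\bigr|^2$ for the rank-one case) is exactly the standard one found there. Nothing is missing.
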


\begin{thm}\cite[Theorem 3.11]{paul}\label{in}
    Let $H(\K)$ be an RKHS, on a nonempty set $X$, with reproducing kernel $\K$ and let $f:X\rightarrow \C$ be a function. Then $f\in H(\K)$ with $\|f\| \leq c$ if and only if $c^2\K(x,y)-f(x)\overline{f(y)}$ is a kernel function on $X$.
\end{thm}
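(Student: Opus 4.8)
The plan is to prove the two implications separately. The forward direction (membership $\Rightarrow$ positivity) is an easy consequence of the Cauchy--Schwarz inequality, while the reverse direction (positivity $\Rightarrow$ membership) is the substantive part, which I would handle through the Riesz representation theorem.

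For the forward direction, I would suppose $f\in H(\K)$ with $\|f\|\le c$, fix finitely many points $x_1,\dots,x_n\in X$ and scalars $\alpha_1,\dots,\alpha_n\in\C$, and set $v=\sum_{j}\alpha_j\K_{x_j}\in H(\K)$. Using the reproducing property $\langle f,\K_{x}\rangle=f(x)$ together with $\langle \K_{x_j},\K_{x_i}\rangle=\K(x_i,x_j)$, one computes that $\sum_{i,j}\overline{\alpha_i}\alpha_j\,\K(x_i,x_j)=\|v\|^2$ and $\sum_{i,j}\overline{\alpha_i}\alpha_j\,f(x_i)\overline{f(x_j)}=|\langle v,f\rangle|^2$. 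The asserted positivity of the quadratic form associated with $c^2\K(x,y)-f(x)\overline{f(y)}$ is then precisely the inequality $|\langle v,f\rangle|^2\le c^2\|v\|^2$, which is immediate from Cauchy--Schwarz and $\|f\|\le c$.

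For the reverse direction, I would run this computation backwards: the hypothesis that $c^2\K(x,y)-f(x)\overline{f(y)}$ is a kernel function says exactly that $\big|\sum_{j}\alpha_j\overline{f(x_j)}\big|^2\le c^2\|v\|^2$ for every $v=\sum_j\alpha_j\K_{x_j}$ in the linear span $\mathcal{K}_0=\operatorname{span}\{\K_x:x\in X\}$. This motivates defining a linear functional $\Lambda$ on $\mathcal{K}_0$ by $\Lambda\big(\sum_j\alpha_j\K_{x_j}\big)=\sum_j\alpha_j\overline{f(x_j)}$. The displayed inequality does double duty here: it shows simultaneously that $\Lambda$ is well defined (if $v=0$ as a vector, then $\Lambda(v)=0$) and that $\|\Lambda\|\le c$. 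Since $\mathcal{K}_0$ is dense in $H(\K)$ --- any vector orthogonal to every $\K_x$ vanishes identically by the reproducing property --- the functional $\Lambda$ extends uniquely to a bounded functional of norm at most $c$ on all of $H(\K)$. The Riesz representation theorem then yields $g\in H(\K)$ with $\Lambda(\,\cdot\,)=\langle\,\cdot\,,g\rangle$ and $\|g\|\le c$. Testing against $\K_x$ gives $\overline{f(x)}=\Lambda(\K_x)=\langle \K_x,g\rangle=\overline{g(x)}$, so $g=f$ pointwise, and hence $f\in H(\K)$ with $\|f\|\le c$.

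The main obstacle is the well-definedness of $\Lambda$ in the reverse direction: a vector in $H(\K)$ may admit several representations as a finite combination of kernels, whereas the defining formula for $\Lambda$ is written in terms of the coefficients rather than the vector itself. The crucial point I would stress is that the positivity hypothesis is exactly what removes this ambiguity, since it forces $\Lambda(v)$ to be controlled by $\|v\|$ alone. Once well-definedness and the norm bound are secured, the density of $\mathcal{K}_0$ and the Riesz representation theorem complete the argument routinely.
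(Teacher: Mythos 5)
Your proof is correct and complete; both directions check out (Cauchy--Schwarz for necessity, and the well-definedness, density, and Riesz representation steps for sufficiency are all handled properly). The paper does not prove this statement at all --- it is quoted verbatim from \cite[Theorem 3.11]{paul} --- and your argument is essentially the standard one given in that reference, so there is nothing to compare beyond noting that your write-up would serve as a self-contained substitute for the citation.
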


Composition operators on RKHS can be characterized by considering the set $\{\K_x: x\in X\}$ as given in following theorem.

\begin{thm} \cite[Theorem 1.4]{cow}\label{adjoint}
    Let $T$ be a bounded linear operator mapping an RKHS into itself, then $T$ is a composition operator
    if and only if the set $\{\K_x: x\in X\}$ is invariant under $T^*$. Moreover, $T^*(\K_x)=\K_{\vp(x)}$, when $T= C_{\vp}$.
\end{thm}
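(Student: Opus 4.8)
The plan is to reduce everything to the single defining relation of the reproducing kernel, namely $\langle f,\K_x\rangle = f(x)$ for all $f\in H(\K)$ and $x\in X$, combined with the adjoint relation $\langle Tf,g\rangle=\langle f,T^*g\rangle$. For any bounded $T$ on $H(\K)$ and any $x\in X$, these two facts together give the identity
$$
\langle f, T^*\K_x\rangle = \langle Tf,\K_x\rangle = (Tf)(x)\qquad\text{for all } f\in H(\K),
$$
which I will use for both implications, since it translates the pointwise action of $T$ into an inner product against $T^*\K_x$.

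For the forward direction, together with the ``Moreover'' clause, I would assume $T=C_\vp$ and specialize the displayed identity. Here $(Tf)(x)=f(\vp(x))=\langle f,\K_{\vp(x)}\rangle$, so $\langle f, T^*\K_x\rangle = \langle f,\K_{\vp(x)}\rangle$ for every $f$. Letting $f$ range over all of $H(\K)$ forces $T^*\K_x=\K_{\vp(x)}$, which lies in the set $\{\K_y:y\in X\}$; hence that set is invariant under $T^*$ and the explicit formula $T^*\K_x=\K_{\vp(x)}$ holds.

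For the converse, I would assume invariance of $\{\K_x:x\in X\}$ under $T^*$ and use it to manufacture the symbol $\vp$. Invariance means that for each $x\in X$ there is a point, which I name $\vp(x)\in X$, with $T^*\K_x=\K_{\vp(x)}$; this defines a self-map $\vp$ of $X$. Substituting back into the identity above yields
$$
(Tf)(x)=\langle f,T^*\K_x\rangle = \langle f,\K_{\vp(x)}\rangle = f(\vp(x))=(f\circ\vp)(x)
$$
for every $f\in H(\K)$ and every $x\in X$, so that $Tf=f\circ\vp=C_\vp f$. In particular $f\circ\vp=Tf\in H(\K)$, confirming that $C_\vp$ is a genuine operator on $H(\K)$ and coincides with $T$.

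The only point that needs care, and the main (mild) obstacle, is the well-definedness of $\vp$ in the converse: the point $\vp(x)$ with $T^*\K_x=\K_{\vp(x)}$ need not be unique if the assignment $y\mapsto\K_y$ fails to be injective. This is harmless, because the value $(C_\vp f)(x)=f(\vp(x))$ depends only on $\K_{\vp(x)}=T^*\K_x$ and not on the chosen representative, so any selection of such a point (invoking the axiom of choice if necessary) produces the same operator. With that selection fixed, the computation above closes the argument.
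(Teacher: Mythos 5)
Your proposal is correct and is the standard argument; the paper itself states this result only as a citation to Cowen--MacCluer (Theorem 1.4) without reproducing a proof, and your reduction to the identity $\langle f,T^*\K_x\rangle=(Tf)(x)$ is exactly the proof given in that reference. Your remark on the well-definedness of $\vp$ when $y\mapsto\K_y$ is not injective is a sensible extra precaution and does not affect the conclusion.
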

\begin{thm}\label{wco1}
Let  $\K$ be a kernel  on a set $X$,
$\psi$ be a complex valued map on a set $S$ and
$\vp: S\rightarrow X$  be a map. Then $\rho(x,y):=\psi(x)\overline{\psi(y)}\K(\vp(x),\vp(y))$ is a kernel function on $S$ and the corresponding reproducing kernel Hilbert space $H(\rho)$ is the image of $H(\K)$ under the weighted composition operator $W_{\vp,\psi}$. Additionally, for every $g\in H(\rho)$, we have $\|g\|_{H(\rho)}=\min\{\|f\|_{H(\K)}: g=W_{\vp,\psi}(f)\}$.
\end{thm}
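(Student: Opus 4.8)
The plan is to realize the weighted composition operator as a map whose values are inner products against explicit vectors of $H(\K)$, and then identify the image --- equipped with the minimal-norm structure --- as the RKHS with kernel $\rho$.

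First, that $\rho$ is a kernel on $S$ is immediate from Proposition \ref{property}: by part (ii) the map $(x,y)\mapsto \K(\vp(x),\vp(y))$ is a kernel on $S$, by part (iii) so is $(x,y)\mapsto \psi(x)\overline{\psi(y)}$, and by part (i) their pointwise product $\rho$ is again a kernel. The substantive part is the identification of the image and its norm. For each $s\in S$ I set $h_s:=\overline{\psi(s)}\,\K_{\vp(s)}\in H(\K)$, and using the reproducing property of $\K$ I would compute $W_{\vp,\psi}(f)(s)=\psi(s)f(\vp(s))=\langle f,h_s\rangle_{H(\K)}$ for every $f\in H(\K)$. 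This has two consequences. First, $N:=\ker W_{\vp,\psi}=\{h_s:s\in S\}^{\perp}$ is a \emph{closed} subspace, so $W_{\vp,\psi}$ descends to a bijection of $N^{\perp}=\overline{\operatorname{span}}\{h_s:s\in S\}$ onto its image $M:=W_{\vp,\psi}(H(\K))$; endowing $M$ with the norm $\|g\|_M=\min\{\|f\|_{H(\K)}:W_{\vp,\psi}(f)=g\}$ makes this bijection an isometry, the minimum being attained at $P_{N^{\perp}}f$ for any preimage $f$, which is precisely the last assertion of the theorem. Second, applying $W_{\vp,\psi}$ to $h_s$ gives $W_{\vp,\psi}(h_s)(t)=\psi(t)\overline{\psi(s)}\K(\vp(t),\vp(s))=\rho(t,s)$, so $\rho_s=W_{\vp,\psi}(h_s)\in M$.

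It then remains to verify that $\rho$ is the reproducing kernel of $(M,\|\cdot\|_M)$. Writing $g=W_{\vp,\psi}(f)$ with $f\in N^{\perp}$ its minimal representative, and noting that the $N^{\perp}$-representative of $\rho_s$ is $P_{N^{\perp}}h_s$ because $W_{\vp,\psi}$ annihilates $N$, the isometry yields $\langle g,\rho_s\rangle_M=\langle f,P_{N^{\perp}}h_s\rangle_{H(\K)}=\langle f,h_s\rangle_{H(\K)}=g(s)$, where the middle equality uses self-adjointness of the projection together with $f\in N^{\perp}$. Thus point evaluations on $M$ are bounded and represented by $\rho_s$, so $M$ is an RKHS with kernel $\rho$; by uniqueness of the RKHS attached to a kernel, $M=H(\rho)$ with equality of norms, which establishes all three assertions simultaneously.

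The main obstacle I anticipate is the careful bookkeeping of the projection onto $N^{\perp}$: the natural candidate kernel vectors $h_s$ need not themselves lie in $N^{\perp}$, yet $W_{\vp,\psi}$ only records their components there, so the reproducing identity must be checked against $P_{N^{\perp}}h_s$ rather than $h_s$. Once one observes that self-adjointness of $P_{N^{\perp}}$ lets one discard the projection when pairing against an element of $N^{\perp}$, this difficulty dissolves; the closedness of $N$ --- which guarantees that the minimum defining $\|\cdot\|_M$ is attained --- is then the only genuinely topological ingredient.
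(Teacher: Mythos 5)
Your argument is correct, and it reaches the conclusion by a genuinely different route than the paper. You realize $W_{\vp,\psi}$ through the point-evaluation identity $W_{\vp,\psi}(f)(s)=\langle f,h_s\rangle_{H(\K)}$ with $h_s=\overline{\psi(s)}\K_{\vp(s)}$, identify $\ker W_{\vp,\psi}=\{h_s\}^{\perp}$, transport the Hilbert structure of $N^{\perp}=\overline{\operatorname{span}}\{h_s\}$ to the image via the quotient (minimal) norm, verify that $\rho_s=W_{\vp,\psi}(h_s)$ reproduces point evaluations there, and finish by the Moore--Aronszajn uniqueness of the RKHS attached to $\rho$. The paper instead proves two inclusions: the containment $W_{\vp,\psi}(H(\K))\subseteq H(\rho)$ with $\|W_{\vp,\psi}f\|_{H(\rho)}\le\|f\|_{H(\K)}$ comes from the positivity criterion of Theorem \ref{in} (that $f\in H(\K)$ with $\|f\|\le c$ iff $c^2\K-f\overline{f}$ is a kernel), and the reverse containment comes from defining $T(\rho_t)=\overline{\psi(t)}\K_{\vp(t)}$ on $\operatorname{span}\{\rho_t\}$, checking it is an isometry into $H(\K)$ by a direct Gram-matrix computation, extending by density, and observing $W_{\vp,\psi}\circ T=\mathrm{id}$. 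The two proofs are dual to one another --- the paper's $T$ is precisely the inverse of your isometric bijection $W_{\vp,\psi}|_{N^{\perp}}$ --- but they buy different things: the paper's kernel-calculus argument stays within the positivity framework it reuses immediately afterwards in Theorem \ref{wco2}, and never needs to mention the kernel of the operator or orthogonal projections; your quotient construction makes the final assertion transparent (the minimum is attained exactly at $P_{N^{\perp}}f$, which the paper leaves implicit) and avoids Theorem \ref{in} altogether, at the cost of invoking the uniqueness theorem for reproducing kernels and the closed-subspace bookkeeping you flag. Your handling of the one delicate point --- pairing against $P_{N^{\perp}}h_s$ rather than $h_s$ and discarding the projection by self-adjointness --- is exactly right.
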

\bpf
It is clear from Proposition \ref{property} that $\rho$ is a kernel function on $S$. Fix $f\in H(\K)$. Then by Theorem \ref{in}, $$\|f \|_{H(\K)}^2\K(x,y)-f(x)\overline{f(y)}\geq 0.$$ 
Thus by part (iii) of Proposition \ref{property},
$$
\|f\|_{H(\K)}^2\K(\vp(s),\vp(t))-f(\vp(s))\overline{f(\vp(t))}\geq 0.
$$
 Now, after multiplying it with the another kernel function $\psi(s)\overline{\psi(t)}$ we get that
 $$
 \|f \|_{H(\K)}^2\psi(s)\overline{\psi(t)}\K(\vp(s),\vp(t))-
 (\psi \cdot(f\circ\vp))(s)\overline{(\psi \cdot(f\circ\vp))(t)}\geq 0.
 $$ Hence for all $f\in H(\K)$,
 $$
 \psi \cdot(f\circ\vp) \in H(\rho)   \text{~with~}  \|\psi \cdot(f\circ\vp)\|_{H(\rho)} \leq \|f \|_{H(\K)}.
  $$
  It follows that $W_{\vp,\psi}(H(\K))\subseteq H(\rho)$.

 For the other way inclusion, consider the linear map $T: V \subset H(\rho)\rightarrow H(\K)$ defined as
 $$
 T(\rho_t)=\overline{\psi(t)}\K_{\vp(t)},
 $$
 where $V=\mathrm{span}\{\rho_t:t\in S\}$.
For any arbitrary function $\sum_{i=1}^{n}\alpha_i\rho_{t_i}\in V$, we have

$$\begin{array}{ccl}
     \displaystyle\left\|T\left(\sum\limits_{i=1}^{n}\alpha_i\rho_{t_i}\right)\right\|_{H(\K)}^2 &=&  \displaystyle\left\langle\sum\limits_{i=1}^{n}\alpha_iT(\rho_{t_i}),\sum_{j=1}^{n}\alpha_jT(\rho_{t_j})\right\rangle\\
    &=&  \displaystyle\sum_{i,j=1}^{n} \alpha_i \overline{\alpha_j} \left\langle \overline{\psi(t_i)}\K_{\vp(t_i)},\overline{\psi(t_j)}\K_{\vp(t_j)}\right\rangle\\
     &=& \displaystyle\sum_{i,j=1}^{n} \alpha_i \overline{\alpha_j} \psi(t_j) \overline{\psi(t_i)} \K(\vp(t_j),\vp(t_i))\\
     &=& \displaystyle\sum_{i,j=1}^{n} \alpha_i \overline{\alpha_j} \rho(t_j,t_i)= \left\| \sum_{i=1}^{n}\alpha_i\rho_{t_i}\right\|_{H(\rho)}^2.
\end{array}$$
Therefore, $T$  is an isometry on $V$, which is a dense subspace of $H(\rho)$.
Consequently, $T$ extends to a linear isometry on $H(\rho)$. We denote this extension by $T$ itself.
Now, $W_{\vp,\psi}\circ T:H(\rho)\rightarrow H(\rho)$ is linear and, for every $t\in S$,
$$
(W_{\vp,\psi}\circ T)(\rho_t)=W_{\vp,\psi}(\overline{\psi(t)}\K_{\vp(t)})=
\overline{\psi(t)} \psi\cdot(\K_{\vp(t)}\circ \vp)= \rho_t.
$$ Thus, $W_{\vp,\psi}\circ T$ is the identity map on $V$, hence, on $H(\rho)$.
Finally, for every $g \in H(\rho)$,
$$
g=(W_{\vp,\psi}\circ T)(g)=W_{\vp,\psi}(T(g))\in W_{\vp,\psi}(H(\K))
\text{ with } \|g\|_{H(\rho)}=\|T(g)\|_{H(\K)}.
$$ This yields that
$$
H(\rho)= W_{\vp,\psi}(H(\K))\text{ and }\|g\|_{H(\rho)}=\min\{\|f\|_{H(\K)}: g=W_{\vp,\psi}(f)\}.
$$
\epf

The above theorem being established, the boundedness of the operator $W_{\vp,\psi}$ can be obtained by
showing that its range is contained in the target space, due to the fact that weighted composition operators
are inherently closed operators. The subsequent theorem provides condition for the inclusion of one RKHS within another.
\begin{thm}\label{subset}\cite[Theorem 5.1]{paul}
    Let $X$ be a nonempty set and $\K_1, \K_2$ be kernels on $X$. Then $H(\K_1)\subseteq H(\K_2)$ with
    $\|f\|_{H(\K_2)}\leq c\|f\|_{H(\K_1)}$ for all $f\in H(\K_1)$, if and only if $c^2\K_2-\K_1$ is a kernel function  on $X$.
\end{thm}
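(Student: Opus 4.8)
The plan is to prove the two implications separately, routing the forward (``if'') direction entirely through Theorem \ref{in} and the closure of kernels under sums and products (Proposition \ref{property}), and handling the converse by a short operator-theoretic argument based on the inclusion map and its adjoint.

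For the ``if'' direction, suppose $c^2\K_2-\K_1$ is a kernel on $X$. I would fix $f\in H(\K_1)$, set $d=\|f\|_{H(\K_1)}$, and apply Theorem \ref{in} in $H(\K_1)$ to conclude that $(x,y)\mapsto d^2\K_1(x,y)-f(x)\overline{f(y)}$ is a kernel. The decisive step is the algebraic identity
$$
c^2 d^2\,\K_2(x,y)-f(x)\overline{f(y)}=d^2\bigl(c^2\K_2(x,y)-\K_1(x,y)\bigr)+\bigl(d^2\K_1(x,y)-f(x)\overline{f(y)}\bigr),
$$
in which the first summand is a nonnegative scalar multiple of the hypothesised kernel (hence a kernel, e.g.\ by Proposition \ref{property}) and the second is a kernel by Theorem \ref{in}. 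By Proposition \ref{property}(i) their sum, namely $(cd)^2\K_2-f\overline{f}$, is a kernel, so Theorem \ref{in} applied in $H(\K_2)$ yields $f\in H(\K_2)$ with $\|f\|_{H(\K_2)}\le cd=c\|f\|_{H(\K_1)}$. As $f$ was arbitrary, this gives both the inclusion and the norm estimate at once.

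For the ``only if'' direction, I would read the hypothesis as the statement that the inclusion $\iota\colon H(\K_1)\to H(\K_2)$ is a well-defined bounded operator with $\|\iota\|\le c$. The key computation is to identify its adjoint on kernel functions: for $x\in X$ and $f\in H(\K_1)$ one has $\langle \iota^*\K_{2,x},f\rangle_{H(\K_1)}=\langle \K_{2,x},\iota f\rangle_{H(\K_2)}=\overline{f(x)}=\langle \K_{1,x},f\rangle_{H(\K_1)}$, whence $\iota^*\K_{2,x}=\K_{1,x}$. Then $A:=c^2 I-\iota\iota^*$ is a self-adjoint operator on $H(\K_2)$, and it is positive because $\|\iota\iota^*\|=\|\iota\|^2\le c^2$. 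Using $\K_{1,x}=\iota^*\K_{2,x}$ I would rewrite the target expression as a Gram form of $A$,
$$
c^2\K_2(x,y)-\K_1(x,y)=\bigl\langle (c^2 I-\iota\iota^*)\K_{2,y},\,\K_{2,x}\bigr\rangle_{H(\K_2)},
$$
so that for any finite families $\{x_i\}\subset X$, $\{a_i\}\subset\C$, writing $h=\sum_i a_i\K_{2,x_i}$ gives $\sum_{i,j}\overline{a_i}a_j\bigl(c^2\K_2(x_i,x_j)-\K_1(x_i,x_j)\bigr)=\langle Ah,h\rangle\ge 0$, which is exactly the assertion that $c^2\K_2-\K_1$ is a kernel.

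The routine parts are the algebraic identity in the first direction and the Gram-form bookkeeping in the second. The step I expect to demand the most care is the adjoint computation $\iota^*\K_{2,x}=\K_{1,x}$ in the converse: one must keep the complex conjugates consistent with the convention used for the reproducing property, and must ensure the inclusion is genuinely treated as an everywhere-defined bounded operator (so that $\iota^*$ and the positivity of $c^2 I-\iota\iota^*$ are meaningful) rather than a mere set-theoretic containment.
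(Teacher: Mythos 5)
The paper offers no proof of this statement; it is quoted verbatim from Paulsen--Raghupathi with a citation, so there is nothing internal to compare against. Your argument is correct and is essentially the standard proof of that cited theorem: the forward direction via the decomposition $c^2d^2\K_2-f\overline{f}=d^2(c^2\K_2-\K_1)+(d^2\K_1-f\overline{f})$ together with Theorem \ref{in} and Proposition \ref{property}, and the converse via the identity $\iota^*\K_{2,x}=\K_{1,x}$ and the positivity of $c^2I-\iota\iota^*$, are both sound (the only pedantic point is that a nonnegative scalar multiple $d^2(c^2\K_2-\K_1)$ is a kernel by combining parts (i) and (iii) of Proposition \ref{property} with the constant function $f\equiv d$, and the degenerate case $d=0$ is trivial). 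It is also a pleasant feature of your proof that the forward direction uses exactly the toolkit the paper itself assembles for Theorem \ref{wco2}.
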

We can now present, as desired, the characterization of bounded weighted composition operators through the properties of
their associated kernels by making use of the results of the two preceding theorems.
\begin{thm}\label{wco2}
    Let $X_i$, $i=1,2$, be nonempty sets, $\vp:X_2\rightarrow X_1$ and $\psi:X_2\rightarrow \C$ be functions and $\K_i$ be a
    kernel on $X_i$, $i=1,2$. Then $W_{\vp,\psi}:H(\K_1)\rightarrow H(\K_2)$, defined as $W_{\vp,\psi}(f)=\psi \cdot (f\circ\vp)$,
    is a bounded operator with $\|W_{\vp,\psi}\|\leq c$ if and only if \begin{equation}\label{a1}
    c^2\K_2(x,y)-\psi(x)\overline{\psi(y)}\K_1(\vp(x),\vp(y))
    \end{equation} is a kernel function on $X_2$.
\end{thm}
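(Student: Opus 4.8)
The plan is to deduce the statement from two facts already available: Theorem \ref{wco1}, which identifies the range of $W_{\vp,\psi}$ as a concrete RKHS, and Theorem \ref{subset}, which characterizes contractive inclusions between RKHSs via a kernel positivity condition. Set $\rho(x,y):=\psi(x)\overline{\psi(y)}\K_1(\vp(x),\vp(y))$. By Theorem \ref{wco1}, $\rho$ is a kernel on $X_2$, the range $W_{\vp,\psi}(H(\K_1))$ is exactly $H(\rho)$, and for every $g\in H(\rho)$ one has $\|g\|_{H(\rho)}=\min\{\|f\|_{H(\K_1)}:g=W_{\vp,\psi}(f)\}$. This norm-minimizing identity is the bridge that converts the operator bound on $W_{\vp,\psi}$ into a contractive inclusion of $H(\rho)$ into $H(\K_2)$.

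The central step I would carry out is the equivalence that $\|W_{\vp,\psi}\|\leq c$ as an operator $H(\K_1)\to H(\K_2)$ holds if and only if $H(\rho)\subseteq H(\K_2)$ with $\|g\|_{H(\K_2)}\leq c\,\|g\|_{H(\rho)}$ for every $g\in H(\rho)$. For the forward direction, boundedness of $W_{\vp,\psi}$ forces its range $H(\rho)$ into $H(\K_2)$; then, given $g\in H(\rho)$, I choose a minimizing $f\in H(\K_1)$ with $g=W_{\vp,\psi}(f)$ and $\|f\|_{H(\K_1)}=\|g\|_{H(\rho)}$, whence $\|g\|_{H(\K_2)}=\|W_{\vp,\psi}(f)\|_{H(\K_2)}\leq c\|f\|_{H(\K_1)}=c\|g\|_{H(\rho)}$. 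For the reverse direction, I take an arbitrary $f\in H(\K_1)$, set $g=W_{\vp,\psi}(f)\in H(\rho)$, and chain $\|W_{\vp,\psi}(f)\|_{H(\K_2)}=\|g\|_{H(\K_2)}\leq c\|g\|_{H(\rho)}\leq c\|f\|_{H(\K_1)}$, where the last inequality again uses Theorem \ref{wco1}, since $\|g\|_{H(\rho)}$ is the infimum over all representatives and hence at most $\|f\|_{H(\K_1)}$.

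With this equivalence established, the conclusion is immediate: applying Theorem \ref{subset} to the pair of kernels $\rho$ and $\K_2$ shows that $H(\rho)\subseteq H(\K_2)$ with contraction constant $c$ holds precisely when $c^2\K_2-\rho$ is a kernel function on $X_2$, and $c^2\K_2(x,y)-\rho(x,y)$ is exactly the expression displayed in \eqref{a1}. I expect the main obstacle to lie in the norm-minimizing half of the equivalence, namely in transferring the operator bound to the inclusion bound without degrading the sharp constant $c$: one needs both the inequality $\|g\|_{H(\rho)}\leq\|f\|_{H(\K_1)}$ and the existence of a norm-achieving representative from Theorem \ref{wco1}, and one must keep in mind that $W_{\vp,\psi}$ need not be injective, so several $f$ may map to the same $g$. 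Once these points are handled carefully, the remainder is a direct invocation of the two cited theorems.
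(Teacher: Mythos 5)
Your proof is correct and follows essentially the same route as the paper: both identify the range of $W_{\vp,\psi}$ as the RKHS $H(\rho)$ via Theorem \ref{wco1} and convert the operator bound $\|W_{\vp,\psi}\|\leq c$ into a contractive inclusion of $H(\rho)$ into $H(\K_2)$, which Theorem \ref{subset} translates into positivity of \eqref{a1}. The only (harmless) divergence is in the converse direction, where the paper argues directly by multiplying \eqref{a1} by $\|f\|^2$ and adding it to the positivity condition obtained from Theorem \ref{in} and Proposition \ref{property}, whereas you invoke the reverse implication of Theorem \ref{subset} together with the minimum-norm identity from Theorem \ref{wco1}; both arguments are valid and preserve the sharp constant $c$.
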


\bpf
Define $\displaystyle\K_3(x,y)=\psi(x)\overline{\psi(y)}f(\vp(x))\overline{f(\vp(y))}$.
Then by Theorem \ref{wco1},
$$
H(\K_3)=W_{\vp,\psi}(H(\K_1))
$$
and for every $g \in H(\K_3)$, there exists a $f\in H(\K_1)$ such that
$g=\psi \cdot (f\circ\vp)$ with $\|f\|_{H(\K_1)}=\|g\|_{H(\K_3)}$.
Now, if $W_{\vp,\psi}$ is bounded with  $\|W_{\vp,\psi}\|\leq c$, then
$W_{\vp,\psi}(H(\K_1))=H(\K_3)\subseteq H(\K_2)$ as well as
$$
\|g\|_{H(\K_2)}\leq c\|f\|_{H(\K_1)}= c\|g\|_{H(\K_3)} \text{ for all } g\in H(\K_3).
$$
Therefore, Theorem \ref{subset} implies
$$
c^2\K_2(x,y)-\psi(x)\overline{\psi(y)}\K_1(\vp(x),\vp(y))\geq0.
$$

For the other direction,
assume that the map given in \ref{a1} is a kernel function   for some constant $c>0$.
For $f\in H(\K_1)$, by Theorem \ref{in}, it follows that
$$
\|f\|^2_{H(\K_1)}\K_1(x,y)-f(x)\overline{f(y)}\geq 0.
$$  Then Proposition \ref{property} implies that
\begin{equation}\label{a2}
    \|f\|^2_{H(\K_1)}\psi(x)\overline{\psi(y)}\K_1(\vp(x),\vp(y))-\psi(x)\overline{\psi(y)}f(\vp(x))\overline{f(\vp(y))}\geq 0.
\end{equation}
Multiplying \ref{a1} with $\|f\|^2$ and adding it to \ref{a2} gives that
$$
\|f\|^2_{H(\K_1)}c^2\K_2(x,y)-\psi(x)\overline{\psi(y)}f(\vp(x))\overline{f(\vp(y))}\geq 0.
$$
Thus, $\psi \cdot(f\circ\vp)\in H(\K_2)$ and
$$
\|\psi \cdot(f\circ\vp)\|_{H(\K_2)} \leq c\|f\|_{H(\K_1)} \mbox{~ for every~ } f\in H(\K_1).
$$
Hence $W_{\vp,\psi}:{H(\K_1)}\rightarrow{H(\K_2)}$ is a bounded operator with $\|W_{\vp,\psi}\|\leq c$.
\epf

Although, showing that the two variable function in \eqref{a1} is a kernel function, for given $\vp$ and $\psi$,
is not straightforward in general. However, when $W_{\vp,\psi}$ is bounded, it is indeed a kernel function, providing us with a valuable source of examples. For instance, if $\psi$ maps $\D^n$ into $\D$ then $$(1-\overline{\psi(w)}\psi(z))\prod_{i=1}^n\frac{1}{1-\overline{w_i}z_i}$$ is a kernel function on $\D^n$.

\section{Composition operators in several variables}\label{sec:several}

Let $\D^n= \D \times \D \times \cdots \times \D$ (n times) be the unit polydisc in $\C^n$ and $H(\D^n)$ be the collection of all holomorphic functions on $\D^n$. The Hardy space $H^2(\D^n)$ is defined as the collection of all $f\in H(\D^n)$, for which
$$
\|f\|^2_{H^2(\D^n)}:=\underset{0\leq r<1}{\sup}
\int_{\mathbb{T}^n}|f(rz)|^2 dm_n(z)<\infty,
$$
where, $\mathbb{T}^n$ is the distinguished boundary of $\D^n$
and $m_n$ denotes the normalized Lebesgue area measure on $\mathbb{T}^n$.
The space $H^2(\D^n)$ is a reproducing kernel Hilbert space, see \cite[Page 4]{rud poly}, with the kernel function
$$
 \K(z,w)= \prod\limits_{i=1}^n \frac{1}{1-\overline{w_i}z_i}\cdot
$$
For an $\alpha > -1$, the weighted Bergman space $A^2_{\alpha}(\D^n)$ is defined as the collection of all $f\in H(\D^n)$, for which
$$
 \|f\|^2_{A^2_\alpha(\D^n)}:=\int_{\D^n}
|f(z)|^2\prod\limits_{i=1}^n (1-|z_i|^2)^{\alpha} dA_n(z)<\infty,
$$
where $A_n$ is the normalized Lebesgue volume measure on $\D^n$. For each $\alpha >-1$, $A^2_\alpha(\D^n)$ is an reproducing kernel Hilbert space, see \cite[Page 821]{stessin}, with kernel function
$$
\K(z,w)=\prod\limits_{i=1}^n\dfrac{1}{(1-\overline{w_i} z_i)^{\alpha+2}}\cdot
$$
The Boundedness of composition operators acting on spaces described above has been well studied. Here we present simpler proofs of a few results using reproducing kernel structure.

Lower bound for bounded composition operators on $H^2(\D^n)$ has been given in \cite[Proposition 7]{jaf}. This can be obtained easily using Proposition \ref{lower bound}  and similar bound can be obtained for $A^2_\alpha(\D^n)$.

\begin{cor}\label{lower bound 1} Let $\vp=(\vp_1,\vp_2,\ldots,\vp_n): \D^n \rightarrow \D^n$ be holomorphic.
    \begin{itemize}
        \item  If $C_\vp$ is bounded on $H^2(\D^n)$ then $$\displaystyle\sup_{z\in\D^n}\left\{\prod_{i=1}^n
        \frac{1-|z_i|^2}{1-|\vp_i(z)|^2}\right\}\leq \|C_\vp\|^2.$$
        \item If $C_\vp$ is bounded on $A^2_\alpha(\D^n)$ then $$\displaystyle\sup_{z\in\D^n}\left\{
        \prod_{i=1}^n\left(\frac{1-|z_i|^2}{1-|\vp_i(z)|^2}\right)^{\alpha+2}\right\}\leq \|C_\vp\|^2.$$
    \end{itemize}
\end{cor}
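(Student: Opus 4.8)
The plan is to specialize the lower-bound estimate of Remark~\ref{lower bound} to the case in which the source and target reproducing kernel Hilbert spaces coincide. For the first item I would take $H(\K)=H(\rho)=H^2(\D^n)$, and for the second $H(\K)=H(\rho)=A^2_\alpha(\D^n)$. In either case $\rho=\K$, so the leftmost quantity in that remark reduces to
$$
\sup_{z\in\D^n}\frac{\|C_\vp^*\K_z\|}{\|\K_z\|}\leq\|C_\vp\|.
$$

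The next step is to evaluate the left-hand side explicitly. By Theorem~\ref{adjoint}, the adjoint of a composition operator acts on kernels by $C_\vp^*\K_z=\K_{\vp(z)}$ for every $z\in\D^n$. Substituting this identity and squaring (legitimate since all terms are nonnegative) gives
$$
\sup_{z\in\D^n}\frac{\|\K_{\vp(z)}\|^2}{\|\K_z\|^2}\leq\|C_\vp\|^2,
$$
so it only remains to compute the two kernel norms. Using the reproducing property $\|\K_z\|^2=\K(z,z)$ together with the explicit polydisc kernels recorded above, I would simplify the ratio $\|\K_{\vp(z)}\|^2/\|\K_z\|^2$ directly.

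For $H^2(\D^n)$ the diagonal of the kernel is $\K(z,z)=\prod_{i=1}^n(1-|z_i|^2)^{-1}$, so the ratio telescopes to $\prod_{i=1}^n(1-|z_i|^2)/(1-|\vp_i(z)|^2)$, which yields the first inequality. For $A^2_\alpha(\D^n)$ the same computation carries the extra exponent, since $\K(z,z)=\prod_{i=1}^n(1-|z_i|^2)^{-(\alpha+2)}$, producing the factor raised to the power $\alpha+2$ and hence the second inequality.

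I do not anticipate a serious obstacle here: once Remark~\ref{lower bound} and the adjoint action of Theorem~\ref{adjoint} are in hand, the argument is a one-line kernel computation for each space. The only points demanding care are invoking the boundedness hypothesis (which is precisely what allows $C_\vp^*$ to be formed and Remark~\ref{lower bound} to apply) and correctly propagating the exponent $\alpha+2$ through the Bergman-space calculation.
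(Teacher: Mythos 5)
Your proposal is correct and follows exactly the route the paper intends: the paper derives this corollary directly from Remark \ref{lower bound} (with $\K=\rho$ the polydisc Hardy or weighted Bergman kernel), using $C_\vp^*\K_z=\K_{\vp(z)}$ and $\|\K_z\|^2=\K(z,z)$ to evaluate the ratio. The kernel computations and the handling of the exponent $\alpha+2$ are all as in the paper.
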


For $f(z)=\sum\limits_{s \in \N_0^n} \hat{f}(s) z^s\in A^2_{\alpha}(\D^n)$ define
$$\|f\|_*^2=\sum\limits_{s \in \N_0^n} |\hat f (s)|^2 \prod\limits_{i=1}^n(s_i+1)^{-1-\alpha}.$$
Then $\|\cdot\|_*$ defines an equivalent norm on $A^2_{\alpha}(\D^n)$ and $(A^2_{\alpha}(\D^n),\|\cdot\|_*)$
is an RKHS with kernel function
$$
\K(z,w)=\sum\limits_{|s|=0}^{\infty} (\overline{w}z)^s\prod\limits_{i=1}^n(s_i+1)^{1+\alpha}.
$$
In \cite[Proposition 11]{jaf} Jafari stated that if $C_\vp$ is bounded on $(A^2_{\alpha}(\D^n),\|\cdot\|_*)$ then $\|C_\vp\|^2 \geq A$, where
$$
A=\sup_{z\in\D^n}\frac{\sum\limits_{|s|=0}^{\infty} |\vp(z)|^{2s}\prod\limits_{i=1}^n
(s_i+1)^{-1-\alpha}}{\sum\limits_{|s|=0}^{\infty} |z|^{2s}\prod\limits_{i=1}^n(s_i+1)^{1+\alpha}}\cdot
$$
Now we give a better lower bound in this case.

\begin{prop}
    If $C_\vp$ is bounded on $(A^2_{\alpha}(\D^n),\|\cdot\|_*)$ then
    $$
    A\leq \sup_{z\in\D^n}\left\{\frac{\sum\limits_{|s|=0}^{\infty}
    |\vp(z)|^{2s}\prod\limits_{i=1}^n (s_i+1)^{1+\alpha}}{\sum\limits_{|s|=0}^{\infty}
    |z|^{2s}\prod\limits_{i=1}^n(s_i+1)^{1+\alpha}}\right\}\leq \|C_\vp\|^2.
    $$
\end{prop}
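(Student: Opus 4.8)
The plan is to recognize the middle quantity as a supremum of ratios of squared reproducing-kernel norms and then to read off both inequalities from results already established. First I would record that, for the kernel $\K$ attached to $(A^2_{\alpha}(\D^n),\|\cdot\|_*)$, one has $\|\K_w\|_*^2=\K(w,w)=\sum_{|s|=0}^\infty |w|^{2s}\prod_{i=1}^n(s_i+1)^{1+\alpha}$. Consequently the middle expression in the statement is exactly
$$
\sup_{z\in\D^n}\frac{\|\K_{\vp(z)}\|_*^2}{\|\K_z\|_*^2}.
$$
This reinterpretation is the only real idea in the argument; everything after it is bookkeeping. I will call this quantity $M$.

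For the right-hand inequality I would invoke Theorem \ref{adjoint}, which gives $C_\vp^*\K_z=\K_{\vp(z)}$ for the composition operator $C_\vp$, so that $\|C_\vp^*\K_z\|_*=\|\K_{\vp(z)}\|_*$. Applying the bound $\sup_{z}\|C_\vp^*\K_z\|_*/\|\K_z\|_*\le\|C_\vp\|$ from Remark \ref{lower bound} (equivalently, from $\|C_\vp^*\K_z\|_*\le\|C_\vp\|\,\|\K_z\|_*$) and squaring yields $M\le\|C_\vp\|^2$, which is the desired upper bound.

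For the left-hand inequality I would compare the two numerators weight by weight. Since $\alpha>-1$ we have $1+\alpha>0$, and because $s_i+1\ge1$ for every $i$, this forces $(s_i+1)^{1+\alpha}\ge(s_i+1)^{-1-\alpha}$, hence $\prod_{i=1}^n(s_i+1)^{1+\alpha}\ge\prod_{i=1}^n(s_i+1)^{-1-\alpha}$ for all $s\in\N_0^n$. As $|\vp(z)|^{2s}\ge0$ and the two ratios defining $A$ and $M$ share the common denominator $\sum_{|s|=0}^\infty |z|^{2s}\prod_{i=1}^n(s_i+1)^{1+\alpha}$, the $M$-ratio dominates the $A$-ratio pointwise in $z$; taking the supremum over $z\in\D^n$ gives $A\le M$.

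Combining the two halves gives $A\le M\le\|C_\vp\|^2$, as claimed. I do not anticipate a genuine obstacle: the content lies entirely in identifying $M$ with the kernel-norm ratio so that Remark \ref{lower bound} applies, while the improvement $A\le M$ is a one-line consequence of the elementary weight inequality $(s_i+1)^{1+\alpha}\ge(s_i+1)^{-1-\alpha}$. The one point requiring a little care is to keep the denominators of the two ratios identical when comparing, so that the comparison reduces cleanly to a termwise inequality between the numerators.
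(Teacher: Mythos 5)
Your proposal is correct and follows essentially the same route as the paper: the left inequality comes from the termwise weight comparison $(s_i+1)^{-1-\alpha}\le (s_i+1)^{1+\alpha}$ over the common denominator, and the right inequality is Remark \ref{lower bound} applied after identifying the middle quantity with $\sup_z \|\K_{\vp(z)}\|_*^2/\|\K_z\|_*^2$. Your observation that the relevant hypothesis is $1+\alpha>0$ (i.e.\ $\alpha>-1$) is in fact slightly more careful than the paper's phrasing.
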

\bpf Given $\alpha>0$ and $s_i\geq0$, it follows that
 $$(s_i+1)^{-1-\alpha}\leq1\leq(s_i+1)^{1+\alpha}.$$
Consequently, the first inequality holds. The second inequality follows from Proposition \ref{lower bound}.
\epf

Figura, in \cite[Lemma $5^\prime$]{fig}, gave a lower bound for the Hardy space of the ball,
which can be improved directly from Proposition \ref{ineq}.
\begin{prop}
    If $C_\vp$ is bounded on $H^2(\B_n)$ then $\|C_\vp\|\geq (1-|\vp(0)|^2)^{\frac{-n}{2}}$.
\end{prop}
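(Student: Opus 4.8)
The plan is to read the bound directly off the reproducing kernel of $H^2(\B_n)$, combined with the chain of inequalities recorded in Remark \ref{lower bound} (which is itself an immediate consequence of Proposition \ref{ineq}(ii)). Recall that the Hardy space of the ball is an RKHS whose reproducing kernel is $\K(z,w)=(1-\langle z,w\rangle)^{-n}$, where $\langle z,w\rangle=\sum_{i=1}^n z_i\overline{w_i}$; in particular
$$
\|\K_w\|^2=\K(w,w)=(1-|w|^2)^{-n}\quad\text{for every }w\in\B_n.
$$

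First I would invoke Theorem \ref{adjoint}, which gives $C_\vp^*\K_w=\K_{\vp(w)}$ for the bounded composition operator $C_\vp$. Together with the norm identity above, this yields, for each $w\in\B_n$,
$$
\frac{\|C_\vp^*\K_w\|}{\|\K_w\|}=\frac{\|\K_{\vp(w)}\|}{\|\K_w\|}=\left(\frac{1-|w|^2}{1-|\vp(w)|^2}\right)^{n/2}.
$$

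Next I would apply Remark \ref{lower bound} with $H(\K)=H(\rho)=H^2(\B_n)$, which asserts that
$$
\sup_{w\in\B_n}\frac{\|C_\vp^*\K_w\|}{\|\K_w\|}\le\|C_\vp\|.
$$
Specializing the supremum to the single point $w=0$ (and using $\vp(0)\in\B_n$) gives
$$
\|C_\vp\|\ge\left(\frac{1-|0|^2}{1-|\vp(0)|^2}\right)^{n/2}=(1-|\vp(0)|^2)^{-n/2},
$$
which is the claimed estimate.

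I do not anticipate a serious obstacle: the argument is a direct specialization. The only points needing care are confirming the exact form of the ball kernel and the resulting norm $\|\K_w\|=(1-|w|^2)^{-n/2}$, so that the exponent $n$ (rather than, say, $n+1$ or $1$) appears, and observing that a supremum over all of $\B_n$ is automatically at least its value at the origin. In fact, retaining the full supremum would produce the even stronger bound $\|C_\vp\|\ge\sup_{w\in\B_n}\bigl((1-|w|^2)/(1-|\vp(w)|^2)\bigr)^{n/2}$, of which the stated inequality is merely the value at $w=0$.
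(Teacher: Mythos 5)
Your proposal is correct and follows essentially the same route as the paper: the paper's one-line proof is exactly $\|C_\vp\|\geq \|C_\vp^*\rho_0\|/\|\rho_0\|=\|\rho_{\vp(0)}\|/\|\rho_0\|=(1-|\vp(0)|^2)^{-n/2}$, i.e., the adjoint-on-kernels identity specialized at the origin, which is what you do via Theorem \ref{adjoint} and Remark \ref{lower bound}. Your closing observation that the full supremum over $w\in\B_n$ gives a stronger bound is consistent with the paper's framing of this as an improvement of Figura's estimate.
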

\bpf
$\displaystyle\|C_\vp\|\geq \frac{\|C_\vp^*\rho_0\|}{\|\rho_0\|}=
\frac{\|\rho_{\vp(0)}\|}{\|\rho_0\|}=(1-|\vp(0)|^2)^{\frac{-n}{2}}
\geq \{2(1-|\vp(0)|)\}^{\frac{-n}{2}}$.
\epf

The results presented in this paper highlight the effectiveness of reproducing kernel Hilbert space techniques in the analysis of composition and weighted composition operators. These methods indicates promising directions for further study, especially on other function spaces and in multivariable settings.

\vspace{0.2in}
\noindent\textbf{Acknowledgement:}
The first author acknowledges the support of the Prime Minister's
Research Fellowship scheme (PMRF ID. 2303407).
The third author was partially supported by the Li Ka Shing Foundation STU-GTIIT Joint Research Grant (Grant no. 2024LKSFG06) and the NSF of Guangdong Province (Grant no. 2024A1515010467).

\end{document}